\documentclass[11pt,reqno]{amsart}

\usepackage{amssymb}
\usepackage{amscd}
\usepackage{amsfonts}
\usepackage{mathrsfs}
\usepackage{setspace}
\usepackage{version}
\usepackage{mathtools}
\usepackage{enumerate}
\usepackage[english]{babel}
\usepackage{xcolor}
\usepackage[colorlinks, citecolor=blue, linkcolor=red]{hyperref}

\newtheorem{theorem}{Theorem}[section]
\newtheorem{lemma}[theorem]{Lemma}
\newtheorem{proposition}[theorem]{Proposition}
\newtheorem{corollary}[theorem]{Corollary}
\theoremstyle{definition}
\newtheorem{definition}[theorem]{Definition}

\newtheorem{example}[theorem]{Example}

\theoremstyle{remark}
\newtheorem{remark}[theorem]{Remark}
\numberwithin{equation}{section}


\newcommand{\BB}{\mathcal{B}}

\newcommand{\FF}{\mathcal{F}}

\renewcommand{\SS}{\mathscr{S}}

\newcommand{\field}[1]{\mathbb{#1}}

\newcommand{\R}{\field{R}}
\newcommand{\N}{\field{N}}

\newcommand{\E}{\field{E}}

\renewcommand{\P}{\field{P}}

\newcommand{\id}{\mathop{\text{\upshape{id}}}}

\newcommand{\supp}{\mathop{\rm{supp}}}

\newcommand{\de}{\delta}
\newcommand{\ep}{\varepsilon}

\newcommand{\la}{\lambda}

\newcommand{\si}{\sigma}

\newcommand{\ph}{\varphi}

\newcommand{\La}{\Lambda}
\newcommand{\Si}{\Sigma}

\newcommand{\Om}{\Omega}

\renewcommand{\SS}{\mathcal{S}}




\newcommand{\BUC}{\text{\rm{BUC}}}




\textheight 22.5truecm \textwidth 14.5truecm
\setlength{\oddsidemargin}{0.35in}\setlength{\evensidemargin}{0.35in}
\setlength{\topmargin}{-.5cm}

\begin{document}

	\title[Convex semigroups on $L^p$-like spaces]{Convex semigroups on $L^p$-like spaces}
	\author{Robert Denk}
	\address{Department of Mathematics and Statistics, University of Konstanz, Germany}
	\email{robert.denk@uni-konstanz.de}
	
	\author{Michael Kupper}
	\address{Department of Mathematics and Statistics, University of Konstanz, Germany}
	\email{kupper@uni-konstanz.de}
	
	\author{Max Nendel}
	\address{Center for Mathematical Economics, Bielefeld University, Germany}
	\email{max.nendel@uni-bielefeld.de}

	\date{\today}

	\thanks{Financial support through the German Research Foundation via CRC 1283 is gratefully acknowledged. We thank Daniel Bartl, Jonas Blessing, Liming Yin and Jos\'e Miguel Zapata Garc\'ia for helpful discussions and comments. We are also indebted to two anonymous referees for their helpful suggestions leading to a major improvement of the manuscript.}
	
	\subjclass[2010]{}
	
	\begin{abstract}
	In this paper, we investigate convex semigroups on Banach lattices with order continuous norm, having $L^p$-spaces in mind as a typical application. We show that the basic results from linear $C_0$-semigroup theory extend to the convex case. We prove that the generator of a convex $C_0$-semigroup is closed and uniquely determines the semigroup whenever the domain is dense. Moreover, the domain of the generator is invariant under the semigroup; a result that leads to the well-posedness of the related Cauchy problem. In a last step, we provide conditions for the existence and strong continuity of semigroup envelopes for families of $C_0$-semigroups. The results are discussed in several examples such as semilinear heat equations and nonlinear integro-differential equations.

       \smallskip
       \noindent \emph{Key words:} Convex semigroup, nonlinear Cauchy problem, well-posedness and uniqueness, Hamilton-Jacobi-Bellman equation

       \smallskip
	\noindent \emph{AMS 2010 Subject Classification:} 47H20; 35A02; 35A09
	\end{abstract}

	\maketitle
	
	\setcounter{tocdepth}{1}

\section{Introduction}

Decision-making in a dynamic random environment naturally leads to so-called stochastic optimal control problems.\ These type of problems arise in numerous applications in economics and mathematical finance, cf.~Fleming-Soner \cite{MR2179357} or Pham \cite{MR2533355}. Examples include irreversible investments, endogenous growth models, such as the AK-model, portfolio optimization, as well as superhedging and superreplication under model uncertainty. In this context, the dynamic programming principle typically leads to convex partial differential equations, so-called Hamilton-Jacobi-Bellman (HJB) equations, where, intuitively speaking, the convexity comes from optimizing among a certain class of Markov processes, each one linked to a linear PDE via its infinitesimal generator. One classical approach to treat nonlinear partial differential equations uses the theory of maximal monotone or m-accretive operators; see, e.g., Barbu \cite{Barbu10}, B\'enilan-Crandall \cite{Benilan-Crandall91}, Br\'ezis \cite{Brezis71}, Evans \cite{Evans87}, Kato \cite{Kato67}, and the references therein. To show that an accretive operator is m-accretive, one has to prove that $1+h\mathcal A$ is surjective for small $h>0$. However, in many cases it is hard to verify this condition. This was one of the reasons for the introduction of viscosity solutions, where existence and uniqueness holds due to the milestone papers by Crandall-Ishii-Lions \cite{Crandall-Ishii-Lions92},\cite{Crandall-Lions83} and Ishii \cite{Ishii89}. Although Perron's method (providing the existence) and Ishii's lemma (in order to obtain uniqueness) are applicable to a large class of HJB equations, solvability in terms of viscosity solutions is a rather weak notion of well-posedness, in the sense that viscosity solutions without any further results on regularity are a priori not differentiable in any sense.

Inspired by Nisio \cite{MR0451420}, in this paper, we approach convex differential equations with a semigroup approach. We extend classical results from semigroup theory regarding uniqueness of the semigroup in terms of the generator, space and time regularity of solutions in terms of initial data, more precisely, invariance of the domain under the semigroup, and classical well-posedness of related Cauchy problems to the convex case.

Given a $C_0$-semigroup $S=(S(t))_{t\in [0,\infty)}$ of linear operators on a Banach space $X$ with generator $A\colon D(A)\subset X\to X$, it is well known that the domain $D(A)$ is invariant under $S$, that is, $S(t)x\in D(A)$ for all $x\in D(A)$ and $t\ge 0$. Moreover, it holds
\begin{equation}\label{sgandgen}
AS(t)x=S(t)Ax\quad \text{for all }x\in D(A)\mbox{ and }t\ge 0.
\end{equation}
This relation is fundamental in order to prove the invariance of the domain under the semigroup, that the semigroup $S$ is uniquely determined through its generator, and results in the classical well-posedness of the associated Cauchy problem.

In this work, we show that the aforementioned fundamental results from linear semigroup theory extend to the convex case, if the underlying space $X$ satisfies some additional properties, and the right-hand side of \eqref{sgandgen} is replaced by a directional derivative. To that end, we assume that $X$ is Dedekind $\si$-complete and that $\lim_{n\to \infty}\|x_n- \inf_{m\in \N}x_m\|=0$ for every decreasing sequence $(x_n)_{n\in\mathbb{N}}$ in $X$ which is bounded below, having $X=L^p(\mu)$ for $p\in [1,\infty)$ and an arbitrary measure $\mu$ in mind as a typical example. Then, a convex $C_0$-semigroup $S$ on $X$ is a family $(S(t))_{t\in[0,\infty)}$ of bounded convex operators $X\to X$, such that, for every $x\in X$, it holds $S(0)x=x$,  $S(t+s)x=S(t)S(s)x$ for all $s,t\ge 0$, and $S(t)x\to x$ as $t\downarrow 0$.
Defining its generator $A\colon D(A)\subset X\to X$ as in the linear case by
\[
Ax:=\lim_{h\downarrow 0} \frac{S(h)x-x}{h},\quad\mbox{
where}\quad D(A):=\Big\{x\in X : \lim_{h\downarrow 0}\frac{S(h)x-x}{h}\text{ exists}\Big\},\] we show that the convex $C_0$-semigroup $S$ leaves the domain $D(A)$ invariant. Moreover, the map $[0,\infty)\to X$, $t\mapsto S(t)x$ is continuously differentiable for all $x\in D(A)$, and the time derivative is given by
\[
AS(t)x=S'(t,x)Ax:=\inf_{h>0}\frac{S(t)(x+h Ax)-S(t)x}{h}.
\]
The right-hand side of this equation is the directional derivative or G\^{a}teaux derivative of the convex operator $S(t)$ at $x$ in direction $Ax$. In particular, if $S(t)$ is linear, the G\^{a}teaux derivative simplifies to $S'(t,x)Ax=S(t)Ax$, which is consistent with \eqref{sgandgen}. We further show that the generator $A$ is always a closed operator which uniquely determines the semigroup $S$ on dense subsets of the domain $D(A)$. As a consequence, $y(t):=S(t)x$, for $x\in D(A)$, defines the unique classical solution to the abstract Cauchy problem
\[
{\rm (CP)}\qquad \begin{cases}y'(t)=Ay(t),& \text{for all }t\geq 0,\\
\,y(0)=x.
\end{cases}
\]
Motivated by stochastic optimal control problems, we then specialize on a setup, where, for an arbitrary index set $\La$, we consider families $(S_\la)_{\la\in \La}$ of convex monotone semigroups $S_\lambda = (S_\lambda(t))_{t\ge 0}$. We then address the question of the existence of a smallest upper bound $S$ of the family $(S_\la)_{\la\in \La}$ within the class of semigroups. We provide conditions that ensure the existence and strong continuity of the smallest upper bound $S$, which we refer to as the (upper) semigroup envelope, making the above mentioned results on convex semigroups applicable to this setting. Formally, the generator $A$ of the semigroup envelope $S$ corresponds of the operator $\sup_{\la\in \La}A_\la$, defined on $\bigcap_{\lambda\in\Lambda} D(A_\lambda)$, where, for $\la\in \La$, $A_\la$ is the generator of $S_\la$. In this case, at least formally, the Cauchy problem (CP) results in an abstract Hamilton-Jacobi-Bellman-type equation of the form
\begin{equation}\label{nonlinPDE}
 \partial_t u(t)=\sup_{\la\in \La} A_\la u(t)\quad \text{for }t\geq 0, \quad u(0)=u_0.
\end{equation}
Following Nisio \cite{MR0451420},  Denk-Kupper-Nendel \cite{dkn2} and Nendel-R\"ockner \cite{roecknen}, where the existence of a semigroup envelope, under certain conditions, has been shown for families of semigroups on $\BUC$, we provide conditions for convolution semigroups on $L^p(\mu)$ that make the aforementioned relation rigorous. In general, the obtained domain $D(A)$ will be larger than the natural domain $\bigcap_{\lambda\in\Lambda} D(A_\lambda)$. However, our results imply the existence and classical differentiability of the solution even for initial values in $D(A)$. We remark that for generators of L\'evy processes in $\BUC$ under uncertainty, recent results were obtained, e.g., in Denk-Kupper-Nendel \cite{dkn2},  Hollender \cite{Hollender16}, K\"uhn \cite{Kuehn19}, Nendel-R\"ockner \cite{roecknen},  and Neufeld-Nutz \cite{NutzNeuf}. Fully nonlinear equations in the strong $L^p$-setting were recently considered, e.g., by Krylov \cite{Krylov17},\cite{Krylov18},\cite{Krylov18a}.

The structure of the paper is as follows. In Section \ref{sec:convexsemigroup}, we introduce the setting and state basic results on convex $C_0$-semigroups, which can be derived from a uniform boundedness principle for convex operators.
Section \ref{sec:dedcomplete} includes the main results on convex $C_0$-semigroups, their generators and related Cauchy problems. In particular, we provide invariance of the domain, uniqueness of the semigroup in terms of the generator and classical well-posedness of the related Cauchy problem. In Section \ref{sec.envelopes}, we consider the smallest upper bound, called the (upper) semigroup envelope, of a family of convex monotone semigroups. In Section \ref{sec:convsglp}, we provide conditions for the existence and strong continuity of the semigroup envelope for families $(S_\la)_{\la\in \La}$ of linear convolution semigroups on $L^p(\mu)$, and relate the generator of the semigroup envelope to $\sup_{\la\in \La} A_\la$, i.e., the smallest upper bound of the generators $(A_\la)_{\la\in \La}$ of $(S_\la)_{\la\in \La}$. We illustrate the results with Example \ref{ex:smallg} and Example \ref{ex.compoundpois}. In the appendix, we collect some additional results on bounded convex operators on general Banach lattices including a version of the uniform boundedness principle for convex operators.

\section{Notation and preliminary results}\label{sec:convexsemigroup}
In this section, we introduce our setup, define convex semigroups, which are the central object of this manuscript, and discuss some technical properties of these semigroups, which will be fundamental for the analysis in the subsequent sections.

Let $X$ be a Banach lattice. For an operator $S\colon X\to X$, we define
\[
 \|S\|_r:=\sup_{x\in B(0,r)}\|S x\|
\]
for all $r>0$, where $B(x_0,r):=\{x\in X\colon \|x-x_0\|\leq r\}$ for $x_0\in X$. We say that an operator $S\colon X\to X$ is \textit{convex} if $S \big(\lambda x+(1-\lambda) y\big)\leq \lambda S x+ (1-\lambda) Sy$ for all $\lambda \in[0,1]$, \textit{positive homogeneous} if $S (\lambda x)= \lambda S x$ for all $\lambda>0$, \textit{sublinear} if $S$ is convex and positive homogeneous, \textit{monotone} if $x\le y$ implies $Sx\le Sy$ for all $x,y\in X$, and \textit{bounded} if $\|S\|_r<\infty$ for all $r>0$.
\begin{definition}\label{defstochsemi}
 A family $S=(S(t))_{t\in[0,\infty)}$ of bounded operators $X\to X$ is called a \textit{semigroup} on $X$ if
 \begin{enumerate}
  \item[(S1)] $S(0)x=x$ for all $x\in X$,
  \item[(S2)] $S(t+s)x=S(t)S(s)x$ for all $x\in X$ and $s,t\in[0,\infty)$.
 \end{enumerate}
 In this case, we say that $S$ is a \textit{$C_0$-semigroup} if, additionally,
 \begin{enumerate}
  \item[(S3)] $S(t)x\to x$ as $t\downarrow 0$ for all $x\in X$.
 \end{enumerate}
We say that  $S$ is \textit{convex}, \textit{sublinear} or \textit{monotone} if $S(t)$ is convex, sublinear or monotone for all $t\ge 0$, respectively.
\end{definition}

Throughout the rest of this section, let $S$ be a convex $C_0$-semigroup on $X$. For $t\ge 0$ and $x\in X$, we define the convex operator
$S_x(t)\colon X\to X$ by
\[
S_x(t) y:=S(t)(x+y)-S(t)x.
\]

\begin{proposition}\label{sem:unibound}
 Let $T>0$ and $x_0\in X$. Then, there exist $L\geq 0$ and $r>0$ such that
 \[
  \sup_{t\in [0,T]}\|S_x(t)y\|\leq L\|y\|
 \]
 for all $x\in B(x_0,r)$ and $y\in B(0,r)$.
\end{proposition}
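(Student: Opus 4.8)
The plan is to reduce the Lipschitz estimate, by convexity, to a uniform (in $t \in [0,T]$) \emph{bound} for $S(t)$ on a small ball around $x_0$, and then to produce that bound from strong continuity (S3) via a Baire category argument and the semigroup property, using the uniform boundedness principle for convex operators from the appendix exactly where the classical Banach--Steinhaus theorem would be used in the linear case.

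\emph{Reduction to equi-boundedness.} I would first claim it suffices to find $r'>0$ and $M'\ge 0$ with $\|S(t)z\|\le M'$ for all $z\in B(x_0,r')$ and $t\in[0,T]$. Granting this, fix $x\in B(x_0,r'/2)$ and $y\in B(0,r'/2)$ with $y\neq 0$, put $\hat y:=\tfrac{r'}{2}\tfrac{y}{\|y\|}$, and note $x\pm\hat y\in B(x_0,r')$. Since $y=\tfrac{2\|y\|}{r'}\hat y$ with $\tfrac{2\|y\|}{r'}\in(0,1]$, convexity of $S_x(t)$ and $S_x(t)0=0$ give
\[
 S_x(t)y\le \tfrac{2\|y\|}{r'}\,S_x(t)\hat y,\qquad 0=S_x(t)0\le \tfrac12 S_x(t)y+\tfrac12 S_x(t)(-y),
\]
hence $S_x(t)y\ge -S_x(t)(-y)\ge -\tfrac{2\|y\|}{r'}S_x(t)(-\hat y)$. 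In a Banach lattice $-a\le b\le c$ forces $|b|\le |a|+|c|$, so $\|b\|\le\|a\|+\|c\|$; applied with $a=\tfrac{2\|y\|}{r'}S_x(t)(-\hat y)$, $c=\tfrac{2\|y\|}{r'}S_x(t)\hat y$ this yields $\|S_x(t)y\|\le \tfrac{2\|y\|}{r'}\bigl(\|S_x(t)\hat y\|+\|S_x(t)(-\hat y)\|\bigr)\le \tfrac{8M'}{r'}\|y\|$, because $\|S_x(t)(\pm\hat y)\|=\|S(t)(x\pm\hat y)-S(t)x\|\le 2M'$. So $r:=r'/2$ and $L:=8M'/r'$ would work.

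\emph{Equi-boundedness via Baire and the semigroup law.} Next I would use that a bounded convex operator on a Banach lattice is continuous (appendix), and that by (S3) every $z\in X$ admits $\delta_z>0$ with $\|S(t)z-z\|\le 1$, hence $\|S(t)z\|\le\|z\|+1$, for $t\in[0,\delta_z]$. Therefore the sets $C_n:=\{z\in X:\|S(t)z\|\le\|z\|+1\text{ for all }t\in[0,1/n]\}$, $n\in\N$, cover $X$, and each is closed (it is an intersection of closed sets, using continuity of $z\mapsto\|S(t)z\|$). By Baire's theorem some $C_N$ contains a ball $B(x_1,\rho)$, so $\sup\{\|S(t)z\|:z\in B(x_1,\rho),\ t\in[0,1/N]\}\le\|x_1\|+\rho+1<\infty$. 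Writing $t=k/N+s$ with integer $k\le NT$ and $s\in[0,1/N)$, using $S(t)z=S(1/N)^kS(s)z$ and that $S(1/N)$ is a bounded operator (hence maps bounded sets into bounded sets), I would propagate this bound through the $\le NT$ compositions to get $M_2:=\sup\{\|S(t)z\|:z\in B(x_1,\rho),\ t\in[0,T]\}<\infty$; in particular $\{S(t):t\in[0,T]\}$ is pointwise bounded on $B(x_1,\rho)$.

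\emph{Passage to the prescribed centre; expected obstacle.} The final point is that Baire's theorem only places the good ball $B(x_1,\rho)$ somewhere, whereas the assertion concerns a ball around the given $x_0$. This is exactly the step for which I would invoke the uniform boundedness principle for convex operators from the appendix: from the local pointwise bound just obtained, together with the continuity and convexity of the $S(t)$, it delivers $r'>0$ and $M'\ge 0$ with $\|S(t)z\|\le M'$ for all $z\in B(x_0,r')$ and $t\in[0,T]$; combined with the first step this finishes the proof. I expect this transfer to be the crux of the argument: because the operators are nonlinear the classical Banach--Steinhaus theorem is unavailable, so the convex uniform boundedness principle has to serve both as the Baire-type mechanism and as the device turning a bound on one ball into a bound on a ball around an arbitrary point. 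A secondary but essential design choice is the precise form of $C_n$: since (S3) yields only pointwise, not locally uniform, continuity at $t=0$, one cannot use sublevel sets $\{z:\sup_{t\le 1/n}\|S(t)z\|\le n\}$; the threshold $\|z\|+1$ is what keeps $\bigcup_n C_n=X$ while the $C_n$ remain closed.
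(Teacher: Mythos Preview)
Your reduction step is correct and amounts to Lemma~\ref{lem:lip} applied to $S_x(t)$. The overall strategy---reduce to a uniform bound, then invoke the convex uniform boundedness principle from the appendix---is the same as the paper's, but your execution introduces both a redundancy and a small gap at the transfer step.

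The paper proceeds more directly: it first establishes, for each \emph{fixed} $x\in X$, the pointwise bound \eqref{eq:help1}, i.e.\ $\sup_{t\in[0,T]}\|S(t)x\|<\infty$. This is done exactly by your propagation idea, but applied to a single point: (S3) gives $R:=\sup_{h\in[0,\delta)}\|S(h)x\|<\infty$ for $\delta=T/n$ small, and then $\|S(k\delta+h)x\|=\|S(k\delta)S(h)x\|\le\max_{0\le k\le n}\|S(k\delta)\|_R<\infty$. With \eqref{eq:help1} for \emph{all} $x$, Theorem~\ref{lem:unibound}(ii) immediately yields $\sup_{x\in B(x_0,r)}\sup_{t\in[0,T]}\|S_x(t)\|_r<\infty$, and Lemma~\ref{lem:lip} finishes.

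Your route instead runs Baire on the sets $C_n$ to locate a ball $B(x_1,\rho)$ with a uniform short-time bound, and then propagates. This part is fine, but your final step---``invoke the uniform boundedness principle from the appendix: from the local pointwise bound just obtained\ldots''---does not match the hypothesis of Theorem~\ref{lem:unibound}, which requires $\sup_{t\in[0,T]}\|S(t)x\|<\infty$ for \emph{every} $x\in X$, not merely for $x\in B(x_1,\rho)$. To verify that hypothesis you would apply your propagation argument to each fixed $x$ separately; but that is precisely the paper's \eqref{eq:help1}, and once you have it, Theorem~\ref{lem:unibound}(ii) already gives the bound on a ball around the prescribed $x_0$, making your explicit Baire step on the $C_n$ superfluous (Baire is run inside the proof of Theorem~\ref{lem:unibound}). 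So the gap is easily closed, but closing it collapses your argument to the paper's.

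A side remark: your claim that one ``cannot use sublevel sets $\{z:\sup_{t\le 1/n}\|S(t)z\|\le n\}$'' is not quite right---these are closed (as intersections of closed sets, each $S(t)$ being continuous) and still cover $X$ by (S3), so they would serve equally well.
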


\begin{proof}
 It suffices to show that
 \begin{equation}\label{eq:help1}
  \sup_{0\leq t\leq T}\|S(t)x\|<\infty
 \end{equation}
 for all $x\in X$. Indeed, under \eqref{eq:help1}, it follows from Theorem \ref{lem:unibound} b) that there exists some $r>0$ such that
 $b:=\sup_{x\in B(x_0,r)}\sup_{0\le t\le T}\|S_x(t)\|_r<\infty$.
 Since $S_x(t)$ is convex and  $S_x(t)0=0$,
 we obtain from Lemma \ref{lem:lip} that
  \[
 \|S_x(t) y\|\leq  \tfrac{2b}{r} \|y\|
 \]
 for all $t\in[0,T]$, $x\in B(x_0,r)$ and $y\in B(0,r)$.

 In order to prove \eqref{eq:help1}, let $x\in X$. Since $S(t)x\to x$ as $t\downarrow 0$, there exists some $n\in \N$ such that
 \[ R:=\sup_{h\in [0,\delta)}\|S(h)x\|<\infty,\]
 where $\delta:=\tfrac{T}{n}$. Since $S(t)$ is bounded for all $t\geq 0$, it holds
 \[
  c:=\max_{0\leq k\leq n} \|S(k\delta)\|_R<\infty.
 \]
 Now, let $t\in [0,T]$. Then, there exist $k\in \{0,\ldots,n\}$ and $h\in [0,\delta)$ such that $t=k\delta+h$. Since $\|S(h)x\|\leq R$, it follows that
 $\|S(t)x\|=\|S(k\delta)S(h)x\|\leq c$. This proves \eqref{eq:help1} and thus completes the proof.
\end{proof}

\begin{remark}
 If $S$ is sublinear, then there exist $\omega \in \R$ and $M\geq 1$ such that
 \begin{equation}\label{eq:growth}
 \|S(t)x\|\leq Me^{\omega t}\|x\|
 \end{equation}
 for all $x\in X$ and $t\in[0,\infty)$. Indeed,  by Proposition \ref{sem:unibound} and sublinearity of the semigroup $S$, one has $$M:=\sup_{t\in [0,1]} \sup_{x\in X}\frac{\|S(t)x\|}{\|x\|}<\infty.$$
 Set $\omega:=\log M$. Then, for all $t\in [0,\infty)$, there exists some $m\in \N$ with $t<m\leq t+1$. By the semigroup property, it follows that
 \[
 \|S(t)x\|=\big\|S\big(\tfrac{t}{m}\big)^mx\big\|\leq M^m\|x\| \leq M^{t+1}\|x\|=Me^{\omega t}\|x\|
 \]
 for all $x\in X$. 
\end{remark}

\begin{corollary}\label{loclipschitz}
 Let $T>0$ and $x_0\in X$. Then, there exist $L\geq 0$ and $r>0$ such that
 \[
  \sup_{t\in [0,T]}\|S(t)y-S(t)z\|\leq L\|y-z\|
 \]
 for all $y,z\in B(x_0,r)$.
\end{corollary}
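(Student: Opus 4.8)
The plan is to observe that the claimed Lipschitz estimate is essentially a restatement of Proposition \ref{sem:unibound} once one recognizes the identity
\[
S(t)y - S(t)z = S_z(t)(y-z),
\]
which holds for all $t \ge 0$ and $y,z \in X$ directly from the definition $S_z(t)w := S(t)(z+w) - S(t)z$ (take $w = y-z$). So the difference of the semigroup applied to two nearby points is controlled by the increment operator $S_z(t)$ based at $z$, evaluated at the displacement $y-z$.

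Concretely, I would first apply Proposition \ref{sem:unibound} with the given $T>0$ and $x_0 \in X$ to obtain constants $L \ge 0$ and $r' > 0$ such that $\sup_{t\in[0,T]} \|S_x(t)w\| \le L\|w\|$ for all $x \in B(x_0,r')$ and all $w \in B(0,r')$. Then I would set $r := r'/2$. The point of halving is that for $y,z \in B(x_0,r)$ one has, on the one hand, $z \in B(x_0,r) \subseteq B(x_0,r')$, so $z$ is an admissible base point; and on the other hand, by the triangle inequality, $\|y-z\| \le \|y-x_0\| + \|x_0-z\| \le 2r = r'$, so $y-z \in B(0,r')$ is an admissible displacement. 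Hence the estimate from Proposition \ref{sem:unibound} applies with $x = z$ and $w = y-z$, giving
\[
\sup_{t\in[0,T]} \|S(t)y - S(t)z\| = \sup_{t\in[0,T]} \|S_z(t)(y-z)\| \le L\|y-z\|
\]
for all $y,z \in B(x_0,r)$, which is the assertion.

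There is no real obstacle here; the work has already been done in Proposition \ref{sem:unibound}, and the only subtlety is the bookkeeping of radii (ensuring both the base point and the displacement stay inside the balls on which the uniform bound is valid), which is handled by passing from $r'$ to $r'/2$. I would keep the write-up to a few lines.
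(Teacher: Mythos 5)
Your proof is correct and is essentially identical to the paper's: the paper likewise invokes Proposition \ref{sem:unibound} with radii $2r$ (your $r'$), writes $S(t)y-S(t)z=S_z(t)(y-z)$, and notes that $z\in B(x_0,2r)$ and $y-z\in B(0,2r)$. The only difference is cosmetic bookkeeping of whether one names the larger or the smaller radius first.
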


\begin{proof}
 By Proposition \ref{sem:unibound}, there exist $L\geq 0$ and $r>0$ such that
  \[
  \sup_{t\in [0,T]}\|S_x(t)y\|\leq L\|y\|
 \]
 for all $x\in B(x_0,2r)$ and $y\in B(0,2r)$. Now, let $y,z\in B(x_0,r)$. Then, $y-z\in B(0,2r)$, and we thus obtain that
 \[
  \sup_{t\in [0,T]}\|S(t)y-S(t)z\|=\sup_{t\in [0,T]}\|S_z(t)(y-z)\| \leq L\|y-z\|,
 \]
 which shows the desired Lipschitz continuity.
\end{proof}

\begin{corollary}\label{cor:cont}
 The map $[0,\infty)\to X,\; t\mapsto S(t)x$ is continuous for all $x\in X$.
\end{corollary}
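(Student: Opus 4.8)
The plan is to deduce continuity of $t\mapsto S(t)x$ from property (S3) together with the semigroup law (S2) and the local, uniform-in-time Lipschitz estimate already established in Corollary~\ref{loclipschitz}. Fix $x\in X$. At $t_0=0$ there is nothing new to do: (S3) is precisely right-continuity at $0$, and there are no points to the left. So I would fix $t_0>0$, pick any $T\geq t_0$, and apply Corollary~\ref{loclipschitz} with this $T$ and $x_0=x$ to obtain constants $L\geq 0$ and $r>0$ such that $\|S(t)y-S(t)z\|\leq L\|y-z\|$ for all $t\in[0,T]$ and all $y,z\in B(x,r)$.

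For right-continuity at $t_0$, I would take $t_0\leq t\leq T$ and write $S(t)x=S(t_0)S(t-t_0)x$ by (S2). Since $S(t-t_0)x\to x$ as $t\downarrow t_0$ by (S3), we have $S(t-t_0)x\in B(x,r)$ for $t$ sufficiently close to $t_0$; as also $x\in B(x,r)$, the Lipschitz estimate gives
\[
\|S(t)x-S(t_0)x\|=\|S(t_0)S(t-t_0)x-S(t_0)x\|\leq L\,\|S(t-t_0)x-x\|\to 0\quad\text{as }t\downarrow t_0.
\]
For left-continuity at $t_0$, I would take $0\leq t<t_0$ and write, again by (S2), $S(t_0)x=S(t)S(t_0-t)x$. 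Now $S(t_0-t)x\to x$ as $t\uparrow t_0$ by (S3), so $S(t_0-t)x\in B(x,r)$ for $t$ close enough to $t_0$, and the Lipschitz estimate yields
\[
\|S(t)x-S(t_0)x\|=\|S(t)x-S(t)S(t_0-t)x\|\leq L\,\|x-S(t_0-t)x\|\to 0\quad\text{as }t\uparrow t_0.
\]
Combining the two one-sided limits gives continuity at every $t_0\geq 0$.

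The argument is essentially mechanical once Corollary~\ref{loclipschitz} is in hand; the only point needing a moment's care is that the arguments fed into $S(t_0)$ (respectively $S(t)$) must lie in the ball $B(x,r)$ on which the Lipschitz bound is valid, and this is exactly what (S3) supplies for $t$ near $t_0$. Hence I do not expect a genuine obstacle here — all the real work (the uniform boundedness principle underlying Proposition~\ref{sem:unibound}, and thereby Corollary~\ref{loclipschitz}) has already been carried out.
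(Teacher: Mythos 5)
Your proof is correct and follows essentially the same route as the paper: both deduce continuity at each $t_0$ from the local Lipschitz estimate of Corollary~\ref{loclipschitz} combined with (S2) and (S3), the paper merely packaging your two one-sided limits into the single identity $\|S(t)x-S(s)x\|=\|S(s\wedge t)S(|t-s|)x-S(s\wedge t)x\|\leq L\|S(|t-s|)x-x\|$. (Only cosmetic remark: take $T=t_0+1$ rather than just $T\geq t_0$ so that the right-sided argument has room.)
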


\begin{proof}
 Let $t\geq 0$ and $x\in X$. Then, by Corollary \ref{loclipschitz}, there exist $L\geq 0$ and $r>0$ such that
 \[
  \sup_{s\in [0,t+1]}\|S(s)y-S(s)x\|\leq L\|y-x\|
 \]
 for all $y\in B(x,r)$. Moreover, there exists some $\de\in (0,1]$ such that $\|S(h)x-x\|\leq r$ for all $h\in [0,\de]$. For $s\geq 0$ with $|s-t|\leq \de$, it follows that
 \[
  \|S(t)x-S(s)x\|=\|S(s\wedge t)S(|t-s|)x-S(s\wedge t)x\|\leq L\|S(|t-s|)x-x\|\to 0
 \]
 as $s\to t$.
\end{proof}

\begin{corollary}\label{cor:crossconv}
Let $(x_n)_{n\in\mathbb{N}}$ and $(y_n)_{n\in\mathbb{N}}$ be two sequences in $X$ with $x_n\to x\in X$ and $y_n\to y\in X$, and $(h_n)_{n\in\mathbb{N}}$ be a sequence in $(0,\infty)$ with $h_n\downarrow 0$. Then, $S_{x_n}(h_n)y_n\to y$.
\end{corollary}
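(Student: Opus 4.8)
The plan is to reduce the statement to the strong continuity axiom (S3) together with the local Lipschitz estimate of Corollary~\ref{loclipschitz}. Unfolding the definition, $S_{x_n}(h_n)y_n = S(h_n)(x_n+y_n) - S(h_n)x_n$, so that
\[
S_{x_n}(h_n)y_n - y = \big(S(h_n)(x_n+y_n) - S(h_n)(x+y)\big) - \big(S(h_n)x_n - S(h_n)x\big) + \big(S(h_n)(x+y) - (x+y)\big) - \big(S(h_n)x - x\big).
\]
The last two terms tend to $0$ as $n\to\infty$ by (S3), since $h_n\downarrow 0$; it remains to control the first two differences.

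For those I would invoke Corollary~\ref{loclipschitz} twice, once with base point $x_0 = x+y$ and once with $x_0 = x$, obtaining constants $L_1,r_1>0$ and $L_2,r_2>0$ with $\sup_{t\in[0,1]}\|S(t)u - S(t)v\| \le L_i\|u-v\|$ for $u,v$ in the respective balls $B(x+y,r_1)$ and $B(x,r_2)$. Since $x_n+y_n \to x+y$, $x_n\to x$ and $h_n\downarrow 0$, for all sufficiently large $n$ we have $x_n+y_n\in B(x+y,r_1)$, $x_n\in B(x,r_2)$ and $h_n\le 1$; hence $\|S(h_n)(x_n+y_n) - S(h_n)(x+y)\| \le L_1\|x_n+y_n - (x+y)\| \to 0$ and $\|S(h_n)x_n - S(h_n)x\| \le L_2\|x_n - x\| \to 0$. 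Combining the four estimates yields $S_{x_n}(h_n)y_n \to y$.

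There is essentially no serious obstacle here; the only point requiring a little care is that the Lipschitz constants from Corollary~\ref{loclipschitz} depend on the base point, so one must apply it separately around $x+y$ and $x$ and then pass to $n$ large enough that both localizations are in force. Alternatively, one could apply Proposition~\ref{sem:unibound} directly with $x_0=x$ to the differences written in the form $S_x(h_n)(\cdot)$, which produces a single pair $(L,r)$ handling both at once.
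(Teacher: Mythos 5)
Your proof is correct and follows essentially the same route as the paper: the paper likewise first shows $S(h_n)z_n\to z$ for a convergent sequence $z_n\to z$ by combining Corollary~\ref{loclipschitz} with (S3), and then applies this to the two sequences appearing in $S(h_n)(x_n+y_n)-S(h_n)x_n$. Your four-term decomposition is just this argument written out in one step, and your remark about the base-point dependence of the Lipschitz constants is exactly the care the paper also takes.
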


\begin{proof}
 We first show that $S(h_n)x_n\to x$. By Corollary \ref{loclipschitz}, there exist $L\geq 0$ and $r>0$ such that
 \[
  \sup_{t\in [0,1]}\|S(t)z-S(t)x\|\leq L\|z-x\|.
 \]
 for all $z\in B(x,r)$. Hence, for $n\in \N$ sufficiently large, we obtain that
 \begin{align*}
  \|S(h_n)x_n- x\|&\leq \|S(h_n)x_n-S(h_n)x\|+\|S(h_n)x-x\|\\
  &\leq L\|x_n-x\|+\|S(h_n)x-x\|.
 \end{align*}
 This shows that $S(h_n)x_n\to x$ as $n\to \infty$. As a consequence,$$S_{y_n}(h_n)x_n=S(h_n)(x_n+y_n)-S(h_n)y_n\to (x+y)-y=x$$
 as $n\to\infty$. The proof is complete.
\end{proof}

\begin{proposition}\label{domainlip}
Let $x\in X$ with
\[
\sup_{h\in (0,h_0]}\bigg\|\frac{S(h)x-x}{h}\bigg\|<\infty\quad  \text{for some }h_0>0.
\]
Then, the map $[0,\infty)\to X$, $t\mapsto S(t)x$ is locally Lipschitz continuous, i.e., for every $T>0$, there exists some $L_T\geq 0$ such that $\|S(t)x-S(s)x\|\leq L_T|t-s|$ for all $s,t\in [0,T]$.
\end{proposition}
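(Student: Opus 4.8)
The plan is to turn a time increment of $t\mapsto S(t)x$ into the action of one of the centered operators $S_x(\cdot)$ on a vector whose size is controlled by the hypothesis, apply the local bound from Proposition~\ref{sem:unibound}, and finally use a telescoping argument over a fine partition of $[0,T]$ to remove the restriction to small increments.

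First I would record the hypothesis in the form $\|S(h)x-x\|\le Ch$ for all $h\in(0,h_0]$, where $C:=\sup_{h\in(0,h_0]}\|(S(h)x-x)/h\|<\infty$. If $C=0$, then iterating $S(h)x=x$ via the semigroup property yields $S(t)x=x$ for all $t\ge 0$ and the claim is trivial, so I assume $C>0$. Fixing $T>0$, I would invoke Proposition~\ref{sem:unibound} with base point $x_0:=x$ to obtain $L\ge 0$ and $r>0$ such that $\sup_{\tau\in[0,T]}\|S_x(\tau)y\|\le L\|y\|$ for all $y\in B(0,r)$. The key identity is that for $0\le s\le t\le T$ the semigroup property (S2) gives
\[
S(t)x-S(s)x=S(s)S(t-s)x-S(s)x=S_x(s)\big(S(t-s)x-x\big).
\]
Hence, whenever $t-s\le\delta:=\min\{h_0,\,r/C\}$, the vector $y:=S(t-s)x-x$ satisfies $\|y\|\le C(t-s)\le r$, so that $\|S(t)x-S(s)x\|\le L\|y\|\le LC\,(t-s)$.

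Finally, for arbitrary $s<t$ in $[0,T]$ I would split $[s,t]$ into finitely many subintervals of length at most $\delta$, apply the previous estimate on each (each subinterval lies in $[0,T]$), and sum telescopically to get $\|S(t)x-S(s)x\|\le LC\,(t-s)$; thus $L_T:=LC$ works. The only point requiring any care is this last passage from ``small increments'' to a genuine Lipschitz constant on $[0,T]$: the hypothesis bounds $\|S(h)x-x\|$ only for $h\le h_0$, so the clean estimate is a priori available only on short time windows and must be chained. Everything else is bookkeeping around Proposition~\ref{sem:unibound} and the displayed identity, and convexity of $S(t)$ enters only through that proposition and need not be invoked again.
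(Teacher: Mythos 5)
Your proof is correct and follows essentially the same route as the paper's: a small-increment Lipschitz estimate obtained from Proposition~\ref{sem:unibound} (the paper packages this via Corollary~\ref{loclipschitz}, which amounts to the same identity $S(t)x-S(s)x=S_x(s)(S(t-s)x-x)$), followed by telescoping over a fine partition. Your choice $\delta=\min\{h_0,r/C\}$ even lets you skip the paper's preliminary step of upgrading the hypothesis to a bound on $\|S(t)x-x\|/t$ over all of $(0,T]$ via Corollary~\ref{cor:cont}, which is a mild simplification.
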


\begin{proof}
 Since the map $[0,\infty)\to X,\; t\mapsto S(t)x$ is continuous by Corollary \ref{cor:cont}, there exists some constant $C_T\geq 0$ such that
 \[
  \sup_{t\in (0,T]}\frac{\|S(t)x-x\|}{t}\leq C_T.
 \]
 By Corollary \ref{loclipschitz}, there exist $L\geq 0$ and $r>0$ such that
 \[
  \sup_{t\in [0,T]}\|S(t)y-S(t)z\|\leq L\|y-z\| \quad \text{for all }y,z\in B(x,r).
 \]
 Further, there exists some $n\in \N$ such that $\sup_{h\in [0,\delta]}\|S(h)x-x\|\leq r$, where $\delta:=\tfrac{T}{n}$. Now, let $L_T:=LC_T$ and $s,t\in [0,T]$ with $s\leq t$. If $t-s\in [0,\delta]$, we have that
 \[
  \|S(t)x-S(s)x\|\leq L\|S(t-s)x-x\|\leq L_T (t-s).
 \]
 In general, there exist $k\in \{0,\ldots, n-1\}$ and  $h\in [0,\delta]$ such that $t-s=k\delta +h$. Then,
 \begin{align*}
  \|S(t)x-S(s)x\|&\leq \|S(t)x-S(s+k\delta)x\|+\sum_{j=1}^k \big\|S(s+j\delta)x-S\big(s+(j-1)\delta\big)x\big\|\\
  &\leq L_T \big(t-(s+k\de)\big)+L_T k\delta=L_T(t-s).
 \end{align*}
 The proof is complete.
\end{proof}

\section{Generators of convex semigroups and related Cauchy problems}\label{sec:dedcomplete}
In this section, we assume that $X$ is a Banach lattice with order continuous norm, i.e., for every net $(x_\alpha)_\alpha$ with $x_\alpha \downarrow 0$, we have $\|x_\alpha\|\to 0$. Note that this is equivalent to the condition that $X$ is \textit{Dedekind $\si$-complete}, i.e.,\ any countable non-empty subset of $X$, which is bounded above, has a supremum, with \textit{$\sigma$-order continuous} norm $\|\cdot \|$, i.e., for every decreasing sequence $(x_n)_{n\in\mathbb{N}}$ with $\inf_{n\in \N}x_n=0$, it holds $\lim_{n\to\infty}\|x_n\|=0$ (see \cite[Theorem 2.4.2]{MR1128093} or \cite[Theorem 1.1]{wnuk1999banach}). Recall that order continuity of the norm $\|\cdot\|$ also implies the \textit{Dedekind super completeness} of $X$, i.e., every non-empty subset which is bounded above has a countable subset with identical supremum, see, for instance, \cite[Corollary 1 to
Theorem II.5.10]{MR0423039} or \cite[Theorem 1.1]{wnuk1999banach}. Moreover, we would like to point out that separability together with Dedekind $\si$-completeness of $X$ implies order continuity of the norm, cf.\ \cite[Exercise 2.4.1]{MR1128093} or \cite[Corollary to Theorem II.5.14]{MR0423039}.\ Typical examples for $X$ are given by $X=L^p(\mu)$ for $p\in [1,\infty)$ and some measure $\mu$, Orlicz spaces, and the space $X=c_0$ of all sequences converging to $0$. For more details on these spaces, we refer to \cite[Section 2.4]{MR1128093} or \cite{wnuk1999banach}. Again, let $S$ be a convex $C_0$-semigroup on $X$.

\begin{definition}
 We define the \textit{generator} $A\colon D(A)\subset X\to X$ of $S$ by
 \begin{align*}\label{def:dom}
  A x:= \lim_{h\downarrow 0} \tfrac{S(h)x-x}{h},\quad\mbox{where}\quad D(A):=\bigg\{x\in X\colon \frac{S(h)x-x}{h}\text{ is convergent for } h\downarrow 0\bigg\}.
 \end{align*}
\end{definition}
In this subsection, we investigate properties of the generator $A$ and its domain $D(A)$. A fundamental ingredient for the analysis is the directional derivative of a convex operator.
Fix $t\ge 0$. Since $S(t)\colon X\to X$ is a convex operator, the function
\[
\R\setminus\{0\}\to X, \quad h\mapsto\frac{S(t)(x+h y)-S(t)x}{h}
\]
is increasing for all $x,y\in X$. In particular,
\[
 -S_x(t)(-y)\leq \frac{S(t)(x-h y)-S(t) x}{-h}\leq \frac{S(t)(x+h y)-S(t) x}{h}\leq S_x(t)y
\]
for $x,y\in X$ and $h\in (0,1]$. Since for all $x,y\in X$ and every sequence $(h_n)_{n\in\mathbb{N}}$ in $(0,\infty)$ with $h_n\to  0$, it holds
\[
\inf_{n\in \N} \frac{S(t)(x+h_n y)-S(t)x}{h_n}\in X \quad\text{and}\quad \sup_{n\in \N} \frac{S(t)x-S(t)(x-h_n y)}{h_n}\in X,
\]
the operators
\begin{equation}\label{def:Sprime}
 S^\prime_+(t,x)y:=\inf_{h> 0} \frac{S(t)(x+h y)-S(t)x}{h}\quad\mbox{and}\quad
 S^\prime_-(t,x)y:=\sup_{h< 0} \frac{S(t)(x+h y)-S(t)x}{h}
\end{equation}
are well-defined with values in $X$.  Due to the $\sigma$-order completeness of the norm
one has
\begin{equation}\label{sprimeconv}
\bigg\| S'_\pm(t,x)y\mp \frac{S(t)(x\pm hy)-S(t)x}{h}\bigg\|\to 0\quad \text{as }h\downarrow 0.
\end{equation}
If the left and right directional derivatives coincide, then the directional derivative is continuous in time. More precisely, the following holds.
\begin{proposition}\label{prop:Sprime1}
Suppose that $S^\prime_+(t,x)y=S^\prime_-(t,x)y$ for some $x,y\in X$ and some $t\geq 0$. Then, the maps $[0,\infty)\to X$, $s\mapsto S^\prime_\pm(s,x)y$ are continuous at $t$. In particular, $\lim_{s\downarrow 0}S^\prime_\pm(s,x)y=y$.
\end{proposition}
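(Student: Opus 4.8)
The plan is to squeeze $S'_\pm(s,x)y$ between two difference quotients taken at one \emph{fixed} increment and then to use that such a quotient depends continuously on the time parameter. For $s\ge 0$ and $h\in\R\setminus\{0\}$ write $Q_s(h):=\tfrac{S(s)(x+hy)-S(s)x}{h}$. Since $S(s)$ is convex, $h\mapsto Q_s(h)$ is increasing on $\R\setminus\{0\}$, so $\sup_{h<0}Q_s(h)\le\inf_{h>0}Q_s(h)$ and hence, for every $h>0$ and every $s\ge 0$,
\[
Q_s(-h)\le S'_-(s,x)y\le S'_+(s,x)y\le Q_s(h).
\]
Two facts will be used. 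By \eqref{sprimeconv}, $\|Q_t(\pm h)-S'_\pm(t,x)y\|\to 0$ as $h\downarrow 0$. And by Corollary \ref{cor:cont}, for each fixed $h\neq 0$ the map $s\mapsto Q_s(h)$ is norm-continuous on $[0,\infty)$, being a scalar multiple of a difference of two continuous orbits.

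Set $z:=S'_+(t,x)y=S'_-(t,x)y$ and fix $\ep>0$. First choose $h>0$ so small that $\|Q_t(h)-z\|\le\ep$ and $\|Q_t(-h)-z\|\le\ep$ — this is where the hypothesis enters, since it lets a \emph{single} element $z$ approximate both the left and the right difference quotient at $t$, so that $\|Q_t(h)-Q_t(-h)\|\le 2\ep$. Next, using continuity of $s\mapsto Q_s(\pm h)$, pick $\de>0$ with $\|Q_s(\pm h)-Q_t(\pm h)\|\le\ep$ whenever $s\ge 0$ and $|s-t|<\de$. For such $s$, the order sandwich gives $0\le S'_\pm(s,x)y-Q_s(-h)\le Q_s(h)-Q_s(-h)$, so monotonicity of the lattice norm turns this into a norm bound, and a routine triangle-inequality estimate yields $\|S'_\pm(s,x)y-z\|\le 6\ep$. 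Since $\ep>0$ was arbitrary, $s\mapsto S'_\pm(s,x)y$ is continuous at $t$ with value $S'_\pm(t,x)y=z$.

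Everything else is bookkeeping with the triangle inequality, so the only real content is the role of the assumption $S'_+(t,x)y=S'_-(t,x)y$: without it the two one-sided difference quotients at $t$ bracket a genuine gap that the above squeeze cannot close, and continuity indeed fails in general. For the final claim of the proposition, apply the statement just proved at $t=0$: because $S(0)=\id$ one has $Q_0(h)=y$ for every $h\neq 0$, hence $S'_+(0,x)y=S'_-(0,x)y=y$, the hypothesis holds automatically, and continuity at $0$ yields $\lim_{s\downarrow 0}S'_\pm(s,x)y=S'_\pm(0,x)y=y$.
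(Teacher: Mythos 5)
Your proof is correct and takes essentially the same route as the paper's: you fix a single increment $h$ chosen via \eqref{sprimeconv} at $t$, invoke Corollary \ref{cor:cont} for continuity in $s$ of the fixed-$h$ difference quotients, and squeeze $S'_\pm(s,x)y$ between $Q_s(-h)$ and $Q_s(h)$, with the hypothesis $S'_+(t,x)y=S'_-(t,x)y$ closing the gap exactly as in the paper (including the treatment of $t=0$ via linearity of $S(0)=\id$). The only cosmetic difference is that you apply monotonicity of the lattice norm to $0\le S'_\pm(s,x)y-Q_s(-h)\le Q_s(h)-Q_s(-h)$ where the paper passes through the corresponding lattice-modulus inequality.
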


\begin{proof}
 Since $S^\prime_-(s,x)y=-S^\prime_+(s,x)(-y)$ for all $s\geq 0$, it suffices to prove the continuity of the map $[0,\infty)\to X$, $s\mapsto S^\prime_+(s,x)y$ at $t$. For all $s\geq 0$ and $h>0$, let
 \[
   D_{h,\pm}(s,x)y:=\frac{S(s)(x\pm hy)-S(s)x}{\pm h}.
 \]
 By Corollary \ref{cor:cont}, the mapping $[0,\infty)\to X,\; s\mapsto D_{h,\pm}(s,x)y$ is continuous for all $h>0$. Let $\ep>0$. By \eqref{sprimeconv}, there exists some $h_\ep>0$ with
 \[
  \big\|D_{h_\ep,+}(t,x)y-S^\prime_+(t,x)y \big\|<\frac{\ep}{4}\quad \text{and}\quad \big\| D_{h_\ep,-}(t,x)y-S^\prime_-(t,x)y\big\|<\frac{\ep}{4}.
 \]
 Since the mapping $[0,\infty)\to X,\; s\mapsto D_{h_\ep,\pm}(s,x)y$ is continuous, there exists some $\de>0$ such that
 \[
  \big\|D_{h_\ep,+}(s,x)y-D_{h_\ep,+}(t,x)y\big\|<\frac{\ep}{4}\quad \text{and}\quad \big\|D_{h_\ep,-}(s,x)y-D_{h_\ep,-}(t,x)y\big\|<\frac{\ep}{4}
 \]
 for all $s\geq 0$ with $|s-t|<\de$. Hence,
 \begin{equation}\label{eq:Sprime1}
  \big\|D_{h_\ep,+} (s,x)y -S^\prime_+ (t,x)y\big\|<\frac{\ep}{2}\quad \text{and}\quad \big\|D_{h_\ep,-} (s,x)y-S^\prime_- (t,x)y\big\|<\frac{\ep}{2}
 \end{equation}
 for all $s\geq 0$ with $|s-t|<\de$. Since $S^\prime_-(s,x)y\leq S^\prime_+(s,x)y$, we obtain that
 \[
  S^\prime_+(s,x)y-S^\prime_-(t,x)y\geq S^\prime_-(s,x)y-S^\prime_-(t,x)y\geq D_{h_\ep,-}(s,x)y-S^\prime_-(t,x)y
 \]
 for all $s\geq 0$. On the other hand,
 \[
  S^\prime_+(s,x)y-S^\prime_+(t,x)y\leq D_{h_\ep,+}(s,x)y -S^\prime_+(t,x)y
 \]
 for all $s\geq 0$. Now, since $S^\prime_+(t,x)y=S^\prime_-(t,x)y$, we obtain that
 \[
  \big|S^\prime_+ (s,x)y-S^\prime_+(t,x)y\big|\leq \big|D_{h_\ep,+}(s,x)y-S^\prime_+(t,x)y\big|+\big|D_{h_\ep,-}(s,x)y- S^\prime_-(t,x)y\big|
 \]
 for all $s\geq 0$ and therefore, by \eqref{eq:Sprime1},
 \[
  \big\|S^\prime_+ (t,x)y-S^\prime_+(s,x)y\big\|<\ep
 \]
 for all $s\geq 0$ with $|s-t|<\de$. Since $S(0)=\id_X$ is linear, it follows that
 \[
  S^\prime_+(0,x)=S^\prime_-(0,x)=\id\nolimits_X
 \]
 and therefore, $\lim_{t\downarrow 0}S^\prime_\pm(t,x)y=S^\prime_\pm(0,x)y=y$.
\end{proof}

It is a straightforward application of Proposition \ref{domainlip} that $[0,\infty)\to X,\;t\mapsto S(t)x$ is locally Lipschitz continuous for all $x\in D(A)$. The following first main result states that it is even continuously differentiable on the domain.

\begin{theorem}\label{generatormain}
 Let $x\in D(A)$ and $t\geq 0$.
 \begin{itemize}
  \item[(i)] It holds $S(t)x\in D(A)$ with
  \[
    AS(t)x=S^\prime_+(t,x)Ax.
  \]
  If $S(t)$ is linear, this results in the well-known relation $AS(t)x=S(t)Ax$.
  \item[(ii)] For $t>0$, it holds
  \[
   \lim_{h\downarrow 0}\frac{S(t)x-S(t-h)x}{h}=S^\prime_-(t,x)Ax.
  \]
  \item[(iii)] It holds $S^\prime_+(t,x)Ax=S^\prime_-(t,x)Ax$. The mapping
  $[0,\infty)\to X$, $s\mapsto S(s)x$ is continuously differentiable and the derivative is given by
  \[
   \tfrac{{\rm d}}{{\rm d}s}S(s)x=AS(s)x=S^\prime_\pm (s,x)Ax\quad \text{for }s\geq 0.
  \]
  \item[(iv)] It holds
  \[
   S(t)x-x=\int_0^t AS(s)x\, \mathrm{d}s=\int_0^t S^\prime_+(s,x)Ax\, \mathrm{d}s=\int_0^t S^\prime_-(s,x)Ax\, \mathrm{d}s.
  \]
 \end{itemize}	
\end{theorem}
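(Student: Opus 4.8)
Throughout write $g(t):=S(t)x$ for the fixed $x\in D(A)$; by Corollary~\ref{cor:cont} and Proposition~\ref{domainlip} the map $g$ is continuous and locally Lipschitz, and I fix $T\ge t$ together with a ball $B(x,r)$ and a constant $L$ as in Corollary~\ref{loclipschitz}, so that $\|S(s)y-S(s)z\|\le L\|y-z\|$ for $s\in[0,T]$ and $y,z\in B(x,r)$.

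\smallskip\noindent\textbf{Part (i).} The idea is to replace $S(h)x$ by $x+hAx$ inside $S(t)$. Using $S(t+h)x=S(t)S(h)x$ (by (S2)) and, for $h$ small, $S(h)x,\,x+hAx\in B(x,r)$, one gets
\[
\Big\|\tfrac{S(t+h)x-S(t)x}{h}-\tfrac{S(t)(x+hAx)-S(t)x}{h}\Big\|=\tfrac1h\big\|S(t)S(h)x-S(t)(x+hAx)\big\|\le L\Big\|\tfrac{S(h)x-x}{h}-Ax\Big\|\longrightarrow 0
\]
as $h\downarrow 0$. Since $\tfrac{S(t)(x+hAx)-S(t)x}{h}\to S'_+(t,x)Ax$ by \eqref{sprimeconv}, the left quotient converges to $S'_+(t,x)Ax$ as well; as it equals $\tfrac{S(h)(S(t)x)-S(t)x}{h}$, this shows $S(t)x\in D(A)$ and $AS(t)x=S'_+(t,x)Ax$. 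If $S(t)$ is linear, $S(t)(x+hAx)-S(t)x=hS(t)Ax$, hence $S'_+(t,x)Ax=S(t)Ax$.

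\smallskip\noindent\textbf{Part (ii).} First I would record the auxiliary limit $\tfrac{S(h)(x+hy)-S(h)x}{h}\to y$ as $h\downarrow 0$, for every $y\in X$: convexity of $S(h)$ gives $\tfrac{S(h)(x+hy)-S(h)x}{h}\le \tfrac{S(h)(x+\delta y)-S(h)x}{\delta}$ for $0<h<\delta$, and the right side tends to $y$ by (S3); on the other hand $\tfrac{S(h)(x+hy)-S(h)x}{h}\ge S'_+(h,x)y$, which tends to $y$ by Proposition~\ref{prop:Sprime1}; taking positive and negative parts and using $\sigma$-order continuity of the norm gives the limit. Applying it with $y=-Ax$ and adding $\tfrac{S(h)x-x}{h}\to Ax$ yields $S(h)(x-hAx)=x+\eta_h$ with $\|\eta_h\|/h\to 0$. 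Then, for $t>0$ and $h\in(0,t]$,
\[
\tfrac{S(t)x-S(t-h)x}{h}=\tfrac{S(t)x-S(t)(x-hAx)}{h}+\tfrac1h\big(S(t-h)(x+\eta_h)-S(t-h)x\big),
\]
using $S(t)(x-hAx)=S(t-h)\big(S(h)(x-hAx)\big)$; the last term has norm $\le L\|\eta_h\|/h\to 0$, and the first tends to $S'_-(t,x)Ax$ by \eqref{sprimeconv}. This proves (ii).

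\smallskip\noindent\textbf{Parts (iii)--(iv).} Abbreviate $g'_+(s):=AS(s)x=S'_+(s,x)Ax$. Applying (i) with base point $S(t)x\in D(A)$ and (S2) gives the cocycle identity $S'_+(\tau+t,x)Ax=S'_+\!\big(\tau,S(t)x\big)\big(S'_+(t,x)Ax\big)$ for $\tau\ge 0$, and letting $\tau\downarrow 0$, using $\lim_{\tau\downarrow0}S'_+(\tau,z)v=v$ (Proposition~\ref{prop:Sprime1}), shows $s\mapsto g'_+(s)$ is right-continuous; the analogous identity built from (ii) gives $\lim_{\tau\downarrow0}S'_-(\tau+t,x)Ax=S'_+(t,x)Ax$. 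Since $g$ is continuous with right-continuous, locally bounded right-derivative $g'_+$, the fundamental theorem of calculus (a continuous map with vanishing right-derivative is constant) yields $S(t)x-x=\int_0^t g'_+(\sigma)\,\mathrm d\sigma$. Feeding in the left-differentiability from (ii), one upgrades this to $S'_+(t,x)Ax=S'_-(t,x)Ax$ for every $t$; then Proposition~\ref{prop:Sprime1} makes $s\mapsto S'_\pm(s,x)Ax$ continuous, so $g\in C^1$ with $g'(s)=AS(s)x=S'_\pm(s,x)Ax$, which is (iii); and (iv) is the fundamental theorem of calculus for this $C^1$ map.

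\smallskip\noindent The main obstacle is this last upgrade: a locally Lipschitz map which is right-differentiable everywhere with right-continuous right-derivative need not be continuously differentiable, so one must genuinely exploit the semigroup structure — e.g.\ that $g'_+(t)=S'_+\!\big(t-u,S(u)x\big)\big(g'_+(u)\big)$ for $u<t$, together with the sandwich $S(\tau)z-S(\tau)(z-v)\le S'_+(\tau,z)v\le S(\tau)(z+v)-S(\tau)z$ — to control $g'_+(u)$ as $u\uparrow t$ and thereby force $g'_+$ to be left-continuous, equivalently $S'_+(t,x)Ax=S'_-(t,x)Ax$.
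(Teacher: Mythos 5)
Parts (i) and (ii) of your proposal are correct. Part (i) is essentially the paper's argument. In part (ii) you reach the paper's key intermediate estimate $\|S(h)(x-hAx)-x\|/h\to 0$ by a different but valid route: you sandwich $\tfrac{S(h)(x+hy)-S(h)x}{h}$ between $S^\prime_+(h,x)y$ and $\tfrac{S_x(h)(\de y)}{\de}$ and invoke Proposition \ref{prop:Sprime1}, whereas the paper bounds $\pm\tfrac{S(h)(x-hAx)-x}{h}$ from above by $S_x(h)(-Ax)+\tfrac{S(h)x-x}{h}$ and $S_{x-hAx}(h)(Ax)-\tfrac{S(h)x-x}{h}$ and uses Corollary \ref{cor:crossconv}. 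Your integral representation $S(t)x-x=\int_0^t g_+'(\si)\,\mathrm{d}\si$ from right-continuity of the right-derivative is also legitimate (modulo a word on strong measurability of $g_+'$), though it turns out to be a detour.

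The genuine gap is in part (iii), and you have in effect flagged it yourself: the identity $S^\prime_+(t,x)Ax=S^\prime_-(t,x)Ax$ is the heart of the theorem, and your final paragraph only names it as ``the main obstacle'' together with candidate ingredients, without an argument. Those ingredients do not obviously combine: the cocycle identity $g_+'(t)=S^\prime_+\big(t-u,S(u)x\big)\big(g_+'(u)\big)$ expresses $g_+'(t)$ in terms of $g_+'(u)$ for $u<t$, which is the wrong direction for controlling $g_+'(u)$ as $u\uparrow t$, and the two-sided sandwich only shows that \emph{if} $g_+'(u)$ converges as $u\uparrow t$ then the limit is $g_+'(t)$ --- not that it converges. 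The paper closes this step with a single convexity inequality: for $0<h\leq \min(t,1)$,
\[
\frac{S(t+h)x-S(t)x}{h}=\frac{S_{S(t-h)x}(h)\big(S(t)x-S(t-h)x\big)}{h}\leq S_{S(t-h)x}(h)\bigg(\frac{S(t)x-S(t-h)x}{h}\bigg),
\]
since $S_z(h)(hv)\leq h\,S_z(h)v$ for $h\leq 1$. Letting $h\downarrow 0$, the left-hand side tends to $AS(t)x=S^\prime_+(t,x)Ax$ by (i), while the right-hand side tends to $S^\prime_-(t,x)Ax$ by (ii) and Corollary \ref{cor:crossconv}; closedness of the positive cone then gives $S^\prime_+(t,x)Ax\leq S^\prime_-(t,x)Ax$, and the reverse inequality holds by definition. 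With this in place, Proposition \ref{prop:Sprime1} yields continuity of $s\mapsto S^\prime_\pm(s,x)Ax$, and (iii) and (iv) follow at once.
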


\begin{proof} (i) Let $t\geq 0$ and $(h_n)_n$ in $(0,\infty)$ with $h_n\downarrow 0$. Then,
  \[
   \frac{S(t+h_n)x-S(t)x}{h_n}-\frac{S(t)(x+h_nAx)-S(t)x}{h_n}=\frac{S(t)S(h_n)x-S(t)(x+h_nAx)}{h_n}.
  \]
  By Corollary \ref{loclipschitz}, there exist $L\geq 0$ and $r>0$ such that
  \[
   \|S(t)y-S(t)z\|\leq L\|y-z\|
  \]
  for all $y,z\in B(x,r)$. For $n\in \N$ sufficiently large, we thus obtain that
  \[
   \bigg\|\frac{S(t)S(h_n)x-S(t)(x+h_nAx)}{h_n}\bigg\|\leq L\bigg\|\frac{S(h_n)x-x}{h_n}-Ax\bigg\|\to 0.
  \]
  Since, by \eqref{sprimeconv},
  \[
   \frac{S(t)(x+h_nAx)-S(t)x}{h_n}\to S^\prime_+(t,x)Ax,
  \]
  we obtain the assertion.

 (ii) Let $t> 0$ and $(h_n)_{n\in\mathbb{N}}$ in $(0,t]$ with $h_n\downarrow 0$. Then,
  \[
   \frac{S(t)x-S(t-h_n)x}{h_n}-\frac{S(t)x-S(t)(x-h_nAx)}{h_n}=\frac{S(t)(x-h_nAx)-S(t-h_n)x}{h_n}.
  \]
  Again, by Corollary \ref{loclipschitz}, there exist $L\geq 0$ and $r>0$ such that
  \[
   \sup_{s\in [0,t]}\|S(s)y-S(s)z\|\leq L\|y-z\|
  \]
  for all $y,z\in B(x,r)$. By Corollary \ref{cor:crossconv}, we have $S(h_n)(x-h_nAx)\to x$. Hence, for $n\in \N$ sufficiently large, it follows that
  \[
   \bigg\|\frac{S(t-h_n)S(h_n)(x-h_nAx)-S(t-h_n)x}{h_n}\bigg\|\leq L\bigg\|\frac{S(h_n)(x-h_nAx)-x}{h_n}\bigg\|.
  \]
  Using Corollary \ref{cor:crossconv} and the convexity of $S_x$ and $S_{x-h_n Ax}$, we find that, for sufficiently large $n\in \N$,
  \begin{align*}
   \frac{S(h_n)(x-h_nAx)-x}{h_n}&=\frac{S_x(h_n)(-h_nAx)}{h_n}+\frac{S(h_n)x-x}{h_n}\\
   &\leq S_x(h_n)(-Ax)+\frac{S(h_n)x-x}{h_n}\to 0
  \end{align*}
  and
  \begin{align*}
   \frac{x-S(h_n)(x-h_nAx)}{h_n}&=\frac{S_{x-h_nAx}(h_n)(h_nAx)}{h_n}-\frac{S(h_n)x-x}{h_n}\\
   &\leq S_{x-h_nAx}(h_n)(Ax)-\frac{S(h_n)x-x}{h_n}\to 0.
  \end{align*}
  This shows that $\big\|\frac{S(h_n)(x-h_nAx)-x}{h_n}\big\|\to 0$, which implies that
  \[
   \bigg\|\frac{S(t)x-S(t-h_n)x}{h_n}-\frac{S(t)x-S(t)(x-h_nAx)}{h_n}\bigg\|\to 0.
  \]
  Since, by \eqref{sprimeconv},
  \[
   \frac{S(t)x-S(t)(x-h_nAx)}{h_n}\to S^\prime_-(t,x)Ax,
  \]
  we obtain the assertion.

 (iii) By definition, it holds $S^\prime_+(t,x)Ax\geq S^\prime_-(t,x)Ax$, and, for $t=0$,
  \[
   S^\prime_+(0,x)Ax=S^\prime_-(0,x)Ax=Ax.
  \]
  Therefore, let $t>0$ and $0<h\leq t$. Then, by convexity of $S_{S(t-h)x}$, for $h$ sufficiently small, it holds
  \begin{align*}
   \frac{S(t+h)x-S(t)x}{h}&=\frac{S(h)S(t)x-S(h)S(t-h)x}{h}\\
   &=\frac{S_{S(t-h)x}(h)\big(S(t)x-S(t-h)x\big)}{h}\\
   &\leq S_{S(t-h)x}(h)\bigg(\frac{S(t)x-S(t-h)x}{h}\bigg),
  \end{align*}
  which implies that
  \begin{align*}
   S^\prime_+(t,x)Ax&= A S(t) x = \lim_{h\downarrow 0} \frac{S(t+h)x-S(t)x}{h}\\
   &\leq \lim_{h\downarrow 0} S_{S(t-h)x}(h)\bigg(\frac{S(t)x-S(t-h)x}{h}\bigg)\\
   &=S^\prime_-(t,x)Ax,
  \end{align*}
  where we used Corollary \ref{cor:crossconv} and (ii) in the last step. Now, Proposition \ref{prop:Sprime1} yields that the mapping $[0,\infty)\to X$, $s\mapsto S^\prime_+(s,x)Ax$ is continuous.

  (iv) This follows directly from (iii) using the fundamental theorem of  calculus.
\end{proof}

As in the linear case, the generator of a convex $C_0$-semigroup is closed.
\begin{proposition}\label{prop:genclosed}
The generator $A$ is closed, i.e., for every sequence $(x_n)_{n\in\mathbb{N}}$ in $D(A)$ with $x_n\to x\in X$ and $Ax_n\to y\in X$, it holds $x\in D(A)$ and $Ax=y$.	
\end{proposition}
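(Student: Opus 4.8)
The plan is to carry the integral identity of Theorem~\ref{generatormain}(iv) over from the approximating sequence $(x_n)$ to $x$ by sandwiching everything between one-sided difference quotients, and then to extract differentiability at $x$ by a squeeze in the lattice. So fix a number $t>0$ and a sufficiently small $h>0$. For each $n$, since $x_n\in D(A)$, Theorem~\ref{generatormain}(i) gives $AS(s)x_n=S'_+(s,x_n)Ax_n$ and Theorem~\ref{generatormain}(iii) gives $S'_+(s,x_n)Ax_n=S'_-(s,x_n)Ax_n$ for every $s\ge 0$; combining this with the infimum/supremum formulas \eqref{def:Sprime} defining $S'_\pm$, I obtain the pointwise bounds
\[
 \frac{S(s)x_n-S(s)(x_n-hAx_n)}{h}\ \le\ AS(s)x_n\ \le\ \frac{S(s)(x_n+hAx_n)-S(s)x_n}{h}\qquad(s\ge0).
\]
All three maps are continuous in $s$ by Corollary~\ref{cor:cont} and Theorem~\ref{generatormain}(iii), so I would integrate over $[0,t]$ and use $\int_0^t AS(s)x_n\,\mathrm{d}s=S(t)x_n-x_n$ from Theorem~\ref{generatormain}(iv) to get
\[
 \int_0^t\frac{S(s)x_n-S(s)(x_n-hAx_n)}{h}\,\mathrm{d}s\ \le\ S(t)x_n-x_n\ \le\ \int_0^t\frac{S(s)(x_n+hAx_n)-S(s)x_n}{h}\,\mathrm{d}s.
\]

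Next I would let $n\to\infty$. By Corollary~\ref{loclipschitz} there are $L\ge 0$ and a ball $B$ around $x$ on which $S(s)$ is $L$-Lipschitz uniformly for $s\in[0,t]$; for $h$ small and $n$ large, $B$ contains $x$, $x\pm hy$, $x_n$ and $x_n\pm hAx_n$, so from $x_n\to x$ and $Ax_n\to y$ one gets $S(t)x_n-x_n\to S(t)x-x$ together with convergence of the two integrands uniformly in $s\in[0,t]$; passing to the integrals yields
\[
 \int_0^t\frac{S(s)x-S(s)(x-hy)}{h}\,\mathrm{d}s\ \le\ S(t)x-x\ \le\ \int_0^t\frac{S(s)(x+hy)-S(s)x}{h}\,\mathrm{d}s.
\]
Finally I would divide by $t$ and send $t\downarrow 0$. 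The maps $s\mapsto\tfrac1h\bigl(S(s)(x+hy)-S(s)x\bigr)$ and $s\mapsto\tfrac1h\bigl(S(s)x-S(s)(x-hy)\bigr)$ are continuous with value $y$ at $s=0$ (Corollary~\ref{cor:cont}), so both averages $\tfrac1t\int_0^t(\cdots)\,\mathrm{d}s$ converge to $y$; since $g\le u\le f$ in a Banach lattice implies $\|u-y\|\le\|f-y\|+\|g-y\|$, this forces $\tfrac{S(t)x-x}{t}\to y$ as $t\downarrow 0$, i.e.\ $x\in D(A)$ and $Ax=y$.

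The step I expect to be the main obstacle is the two-sided estimate on $AS(s)x_n$ above. It is essential that along the direction $Ax_n$ the directional derivative of $S(s)$ is \emph{two-sided} (which is exactly the content of Theorem~\ref{generatormain}(iii)): only then does one also have the \emph{lower} bound, which is what survives $n\to\infty$ in usable form. A naive Lipschitz comparison of $\tfrac{S(t)x-x}{t}$ with $\tfrac{S(t)x_n-x_n}{t}$ is of no help here, since the resulting error $\tfrac1t\|x_n-x\|$ blows up as $t\downarrow 0$. The remaining ingredients---fixing the ball on which Corollary~\ref{loclipschitz} gives a uniform Lipschitz constant, the elementary continuity facts for the $X$-valued integrals, and the monotonicity of the Bochner integral used to integrate the order inequality---are routine.
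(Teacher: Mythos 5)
Your proof is correct and follows essentially the same route as the paper's: both arguments hinge on Theorem~\ref{generatormain}(iii) to sandwich $AS(s)x_n$ two-sidedly between difference quotients of the convex operator $S(s)$, on the integral identity of Theorem~\ref{generatormain}(iv), on Corollary~\ref{loclipschitz} to pass to the limit in $n$, and on continuity at $s=0$ to extract the derivative. The only (cosmetic) difference is that you integrate the order inequality at a fixed step size $h$ and finish with a lattice squeeze, whereas the paper takes norms inside the integral immediately, using the increments $S_{x_n}(s)(\pm Ax_n)$ (i.e.\ step size $1$) and the estimate $\|S'_+(s,x_n)Ax_n-y\|\le 2L\|Ax_n-y\|+\|S_{x_n}(s)y-y\|+\|S_{x_n}(s)(-y)+y\|$.
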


\begin{proof}
 First, notice that
 \[
  -S_{x_n}(s)(-Ax_n)\leq S^\prime_+(s,x_n)Ax_n\leq S_{x_n}(s)Ax_n,
 \]
 where we have used $S^\prime_+(s,x_n)Ax_n=S^\prime_-(s,x_n)Ax_n$ from Theorem \ref{generatormain} (iii), for all $s\geq 0$ and $n\in \N$. By Corollary \ref{loclipschitz}, there exist $L\geq 0$ and $r>0$ such that
 \[
  \sup_{s\in [0,1]}\|S(s) w-S(s)z\|\leq L\|w-z\|
 \]
 for all $w,z\in B(x\pm y,r)$. Hence, for $n\in \N$ sufficiently large,
 \begin{align*}
 \|S_{x_n}(s) A x_n-S_{x_n}(s)y\|  &\le L \|Ax_n-y\|\quad
\mbox{and}\\ \|S_{x_n}(s)(-A x_n)-S_{x_n}(s)(-y)\|&\le L \|Ax_n-y\|,
 \end{align*}
 so that
 \[
  \|S^\prime_+(s,x_n)Ax_n-y\|\leq 2L\|Ax_n-y\|+\|S_{x_n}(s)y-y\|+\|S_{x_n}(s)(-y)+y\|
 \]
 for all $s\in[0,1]$. By Theorem \ref{generatormain},
 \[
  \frac{S(h)x_n-x_n}{h}-y=\frac{1}{h}\int_0^h \big(S^\prime_+(s,x_n)Ax_n-y\big)\,\mathrm{d}s
 \]
for all $h>0$. Hence, for fixed $h\in (0,1]$, we find that
\begin{align*}
\bigg\|\frac{S(h)x-x}{h}-y\bigg\|&=\lim_{n\to \infty} \bigg\|\frac{S(h)x_n-x_n}{h}-y\bigg\|\leq \limsup_{n\to \infty} \frac{1}{h}\int_0^h \big\|S^\prime_+(s,x_n)Ax_n-y\big\|\, {\rm d}s\\
&\leq \limsup_{n\to \infty} 2L\|Ax_n-y\|+\sup_{0\leq s\leq h}\big(\|S_{x_n}(s)y-y\|+\|S_{x_n}(s)(-y)+y\|\big)\\
&=\sup_{0\leq s\leq h}\big(\|S_x(s)y-y\|+\|S_x(s)(-y)+y\|\big),
\end{align*}
where we used Corollary \ref{loclipschitz} in the last step. This shows that
\[
\bigg\|\frac{S(h)x-x}{h}-y\bigg\|\leq  \sup_{0\leq s\leq h}\big(\|S_x(s)y-y\|+\|S_x(s)(-y)+y\|\big)\to 0\quad \mbox{as }h\downarrow 0.
\]
That is, $x\in D(A)$ with $Ax=y$.
\end{proof}

The following theorem is the second main result of this section and shows uniqueness of the solution.

\begin{theorem}\label{thm:uniqueness1}
 Let $y\colon [0,\infty)\to X$ be a continuous function such that $y(t)\in D(A)$ for all $t\ge 0$, and
 \[
  \lim_{h\downarrow 0}\frac{y(t+h)-y(t)}{h}= Ay(t)\quad\text{for all }t\geq 0.
 \]
 Then, $y(t)=S(t)x$ for all $t\geq 0$, where $x:=y(0)$.		
\end{theorem}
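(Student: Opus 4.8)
The plan is to fix $t>0$ (for $t=0$ there is nothing to prove, as $y(0)=x=S(0)x$) and to show that the auxiliary path
\[
\varphi\colon[0,t]\to X,\qquad \varphi(s):=S(t-s)\,y(s),
\]
is constant; since $\varphi(0)=S(t)x$ and $\varphi(t)=y(t)$, this is exactly the assertion. First I would check that $\varphi$ is continuous on $[0,t]$: for $s_n\to s$ one splits $\|S(t-s_n)y(s_n)-S(t-s)y(s)\|$ into $\|S(t-s_n)y(s_n)-S(t-s_n)y(s)\|$, controlled by Corollary \ref{loclipschitz} (with $T=t$ and base point $y(s)$, so that the Lipschitz constant is uniform in the time variable over $[0,t]$) together with the continuity of $y$, plus $\|S(t-s_n)y(s)-S(t-s)y(s)\|$, which tends to $0$ by Corollary \ref{cor:cont}.

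The heart of the argument is to show that $\varphi$ has a vanishing right derivative at every $s\in[0,t)$, in the strong form $\|\varphi(s+h)-\varphi(s)\|=o(h)$ as $h\downarrow0$. For $0<h\le t-s$ I would invoke the semigroup property (S2) in the form $S(t-s)=S(t-s-h)\,S(h)$ to rewrite
\[
\varphi(s+h)-\varphi(s)=S(t-s-h)\,y(s+h)-S(t-s-h)\big(S(h)y(s)\big).
\]
Since $t-s-h\in[0,t]$ and, for $h$ small, both $y(s+h)$ and $S(h)y(s)$ lie in a fixed ball around $y(s)$ (by continuity of $y$ and by Corollary \ref{cor:cont}), Corollary \ref{loclipschitz} yields $\|\varphi(s+h)-\varphi(s)\|\le L\,\|y(s+h)-S(h)y(s)\|$. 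Now the hypothesis on $y$ gives $\tfrac1h\big(y(s+h)-y(s)\big)\to Ay(s)$, while $y(s)\in D(A)$ gives $\tfrac1h\big(S(h)y(s)-y(s)\big)\to Ay(s)$; subtracting, $\tfrac1h\big(y(s+h)-S(h)y(s)\big)\to0$, that is, $\|y(s+h)-S(h)y(s)\|=o(h)$. Hence $\tfrac1h\|\varphi(s+h)-\varphi(s)\|\to0$ for every $s\in[0,t)$.

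Finally I would use the elementary real-variable fact that a continuous function on a compact interval whose upper right Dini derivative is $\le0$ everywhere is non-increasing. Applying this to the scalar function $s\mapsto\|\varphi(s)-\varphi(0)\|$ --- which is continuous, vanishes at $s=0$, is nonnegative, and, by the reverse triangle inequality, has upper right Dini derivative at $s$ bounded by $\limsup_{h\downarrow0}\tfrac1h\|\varphi(s+h)-\varphi(s)\|=0$ --- forces it to be identically $0$ on $[0,t]$. Thus $\varphi$ is constant, and in particular $y(t)=\varphi(t)=\varphi(0)=S(t)x$.

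I expect the only genuine subtlety to lie in the middle paragraph: recognizing that the hypothesis $y(s)\in D(A)$ is precisely what allows one to replace the ``external'' increment $y(s+h)-y(s)$ by the ``intrinsic'' semigroup increment $S(h)y(s)-y(s)$ up to an $o(h)$ error, and then ensuring, via the uniform-in-time Lipschitz bound of Corollary \ref{loclipschitz}, that this error is not spoiled when passed through the operator $S(t-s-h)$. The concluding Dini-derivative lemma is standard, and I would either cite it or insert its short proof (a one-parameter growth argument comparing $\|\varphi(s)-\varphi(0)\|$ with $\varepsilon s$).
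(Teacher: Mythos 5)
Your proof is correct and follows essentially the same route as the paper: the same auxiliary function $s\mapsto S(t-s)y(s)$, the same use of the semigroup property to reduce the increment to $S(t-s-h)y(s+h)-S(t-s-h)\bigl(S(h)y(s)\bigr)$, the same local Lipschitz bound, and the same key cancellation $\tfrac1h\bigl(y(s+h)-S(h)y(s)\bigr)\to 0$ coming from $y(s)\in D(A)$. The only (cosmetic) difference is the final step: the paper tests against functionals $\mu\in X'$ and cites Pazy's scalar lemma on functions with vanishing right derivative, whereas you apply the equivalent Dini-derivative argument directly to $s\mapsto\|\varphi(s)-\varphi(0)\|$.
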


\begin{proof}
Let $t>0$ and $g(s):=S(t-s)y(s)$ for all $s\in [0,t]$. Fix $s\in [0,t)$. For every $h>0$ with $h\leq t-s$, it holds
\begin{align*}
\frac{g(s+h)-g(s)}{h}&=\frac{S(t-s-h)y(s+h)-S(t-s)y(s)}{h}\\
&= \frac{S_{S(h)y(s)}(t-s-h)\big(y(s+h)-S(h)y(s)\big)}{h}.
\end{align*}
By Proposition \ref{sem:unibound}, there exist $L\geq 0$ and $r>0$ such that
\begin{equation}\label{rd01}
 \sup_{\tau\in [0,t]} \|S_x(\tau)z\|\leq L\|z\|
\end{equation}
for all $x\in B(y(s),r)$ and $z\in B(0,r)$. Hence, for $h$ sufficiently small, it follows that
\begin{align*}
 \bigg\|\frac{S_{S(h)y(s)}(t-s-h)\big(y(s+h)-S(h)y(s)\big)}{h}\bigg\|&\leq L\bigg\|\frac{y(s+h)-S(h)y(s)}{h}\bigg\|,
\end{align*}
where we used that $\lim_{h\downarrow 0} y(s+h)=y(s)=\lim_{h\downarrow 0}S(h)y(s)$. Since $y(s)\in D(A)$,
\begin{align*}
\frac{y(s+h)-S(h)y(s)}{h}&=\frac{y(s+h)-y(s)}{h}-\frac{S(h)y(s)-y(s)}{h}
\to Ay(s)-Ay(s)=0
\end{align*}
as $h\downarrow 0$. This shows that $\tfrac{g(s+h)-g(s)}{h}\to 0$ as $h\downarrow 0$.

We next show that the map $g\colon [0,t]\to X$ is continuous. Since its right derivative exists, it follows that $\lim_{h\downarrow 0}g(s+h)=g(s)$ for $s\in [0,t)$. Now, let $s\in (0,t]$ and $h>0$ sufficiently small. Then,
\begin{align*}
g(s-h)-g(s)&=S(t-s)S(h)y(s-h)-S(t-s)y(s)\\
&=S_{y(s)}(t-s)\big(S(h)y(s-h)-y(s)\big).
\end{align*}
Since $y(s-h)\to y(s)$ as $h\downarrow 0$, by Corollary \ref{cor:crossconv}, it follows that $S(h)y(s-h)\to y(s)$ as $h\downarrow 0$. Together with \eqref{rd01}, we obtain that $\lim_{h\downarrow 0}g(s-h)=g(s)$.

Finally, fix $\mu$ in the dual space $X^\prime$. Since $\mu g\colon [0,t]\to \mathbb{R}$ is continuous and its right derivative vanishes on $[0,t)$, it follows from \cite[Lemma 1.1, Chapter 2]{pazy2012semigroups} that $[0,t]\to X,\; s\mapsto \mu g(s)$ is constant. In particular, $\mu y(t)=\mu g(t)=\mu g(0)=\mu S(t)x$. This shows that $y(t)= S(t)x$, as $X^\prime$ separates the points of $X$.
\end{proof}

\begin{remark}
 With similar arguments as in the proof the previous theorem, one can show the following statement: Let $y\colon [0,\infty)\to X$ be a continuous function with $y(t)\in D(A)$ for all $t\ge 0$ and
 $\lim_{h\downarrow 0}\tfrac{y(t)-y(t-h)}{h}=Ay(t)$ for all $t>0$. Then, $y(t)=S(t)x$ for all $t\geq 0$ with $x:=y(0)$.
\end{remark}

Theorem~\ref{thm:uniqueness1} implies that convex semigroups are determined by their generators whenever the domain is dense.
\begin{corollary}\label{cor:uniqueness1}
 Let $T$ be a convex $C_0$-semigroup with generator $B\subset A$, i.e., $D(B)\subset D(A)$ and $A|_{D(B)}=B$. If $\overline{D(B)}=X$, then $S(t)=T(t)$ for all $t\geq 0$.
\end{corollary}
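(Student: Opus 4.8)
The plan is to reduce the statement of Corollary~\ref{cor:uniqueness1} to Theorem~\ref{thm:uniqueness1} applied to the semigroup $S$, using $T$ to produce the candidate solution. First I would fix $x\in D(B)$ and set $y(t):=T(t)x$ for all $t\ge 0$. Since $B$ is the generator of the $C_0$-semigroup $T$ and $x\in D(B)$, Theorem~\ref{generatormain} (applied to $T$ in place of $S$) gives that $y$ is continuously differentiable with $y(t)\in D(B)$ and $\tfrac{\rm d}{{\rm d}t}T(t)x = BT(t)x$ for all $t\ge 0$; in particular the right derivative at $t$ equals $BT(t)x$. Because $B\subset A$, we have $y(t)=T(t)x\in D(B)\subset D(A)$ and $Ay(t)=By(t)=\lim_{h\downarrow 0}\tfrac{y(t+h)-y(t)}{h}$ for all $t\ge 0$. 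Moreover $y$ is continuous (it is even $C^1$), so $y$ satisfies exactly the hypotheses of Theorem~\ref{thm:uniqueness1} with respect to the semigroup $S$. Hence $T(t)x = y(t) = S(t)x$ for all $t\ge 0$ and all $x\in D(B)$.

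The second step is to upgrade this from $x\in D(B)$ to all $x\in X$ by a density and continuity argument. Fix $t\ge 0$ and $x\in X$; since $\overline{D(B)}=X$, choose a sequence $(x_n)_{n\in\N}$ in $D(B)$ with $x_n\to x$. By Corollary~\ref{loclipschitz} applied to $S$ (and separately to $T$), there exist $L\ge 0$ and $r>0$ such that $\|S(s)w-S(s)z\|\le L\|w-z\|$ and $\|T(s)w-T(s)z\|\le L\|w-z\|$ for all $s\in[0,t]$ and $w,z\in B(x,r)$; enlarging $L$ we may use the same constant for both. Then for $n$ large enough that $x_n\in B(x,r)$,
\[
\|S(t)x - T(t)x\| \le \|S(t)x - S(t)x_n\| + \|S(t)x_n - T(t)x_n\| + \|T(t)x_n - T(t)x\| \le 2L\|x-x_n\| + 0,
\]
using $S(t)x_n=T(t)x_n$ from the first step. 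Letting $n\to\infty$ gives $S(t)x=T(t)x$, and since $t\ge0$ and $x\in X$ were arbitrary, $S(t)=T(t)$ for all $t\ge 0$.

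I do not expect a genuine obstacle here; the corollary is essentially a direct packaging of Theorem~\ref{thm:uniqueness1}. The one point that requires a little care is that Theorem~\ref{thm:uniqueness1} is stated for the semigroup $S$ and its generator $A$, so one must verify that the curve $t\mapsto T(t)x$ built from the \emph{other} semigroup $T$ really lands in $D(A)$ with the correct right derivative — and this is exactly where the inclusion $B\subset A$ is used, together with the fact (from Theorem~\ref{generatormain} applied to $T$) that $t\mapsto T(t)x$ stays in $D(B)$ for $x\in D(B)$. A secondary minor subtlety is ensuring the local Lipschitz constants from Corollary~\ref{loclipschitz} can be chosen uniformly in a fixed neighbourhood of $x$ on the compact time interval $[0,t]$, which is immediate since that corollary already provides such a constant valid for all $s\in[0,T]$ with $T=t$.
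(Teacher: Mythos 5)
Your proposal is correct and follows essentially the same route as the paper: verify that $t\mapsto T(t)x$ satisfies the hypotheses of Theorem~\ref{thm:uniqueness1} for $x\in D(B)$ (using Theorem~\ref{generatormain} for $T$ and the inclusion $B\subset A$), then extend from $D(B)$ to $X$ by density and continuity. The only cosmetic difference is that the paper invokes Corollary~\ref{cor:Lip} (continuity of the bounded convex operators $S(t)$ and $T(t)$) for the density step, whereas you use the locally uniform-in-time Lipschitz bound of Corollary~\ref{loclipschitz}; both are valid.
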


\begin{proof}
	For every $x\in D(B)$, the mapping $[0,\infty)\to X$, $t\mapsto T(t)x$ satisfies the assumptions of Theorem \ref{thm:uniqueness1}. Indeed, $[0,\infty)\to X,\;t\mapsto T(t)x$ is continuous by Corollary \ref{cor:cont}, and, by Theorem \ref{generatormain}, $T(t)x\in D(B)\subset  D(A)$ for all $t\geq 0$
	with
	\[
	\lim_{h\downarrow 0}\frac{T(t+h)x-T(t)x}{h}=\lim_{h\downarrow 0}\frac{T(h)T(t)x-T(t)x}{h}=BT(t)x=AT(t)x.
	\]
By Theorem \ref{thm:uniqueness1}, it follows that $T(t)x=S(t)x$ for all $t\ge 0$. Finally, since, by Corollary \ref{cor:Lip}, the bounded convex functions $T(t)$ and $S(t)$ are continuous and $\overline{D(B)}=X$, it follows that $S(t)=T(t)$ for all $t\ge 0$.	
\end{proof}

\begin{corollary}\label{cor:wellposed}
The abstract Cauchy problem
\[
 {\rm (CP)}\qquad \begin{cases}y'(t)=Ay(t),& \text{for all }t\geq 0,\\
  \,y(0)=x
 \end{cases}
\]
is \textit{(classically) well-posed} in the following sense:
\begin{itemize}
 \item[(i)] For all $x\in D(A)$, {\upshape (CP)} has a unique classical solution $y\in C^1([0,\infty);X)$ with $y(t)\in D(A)$ for all $t\geq 0$ and $Ay\in C([0,\infty);X)$.
 \item[(ii)] For all $x_0\in D(A)$ and $T>0$, there exist $L\geq 0$ and $r>0$ such that
 \[
  \sup_{t\in [0,T]}\|y(t,x)-y(t,z)\|<L\|x-z\| \quad \text{for all }x,z\in D(A)\cap B(x_0,r),
 \]
 where $y(\, \cdot\, , x)$ denotes the unique solution to {\upshape (CP)} with initial value $x\in D(A)$.
 \item[(iii)] For all $t>0$ and $r>0$, there exists some constant $C\geq 0$ such that
 \[
  \|y(t,x)\|\leq C\quad \text{for all }x\in D(A)\text{ with }\|x\|\leq r.
 \]
\end{itemize}
\end{corollary}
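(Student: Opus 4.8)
The plan is to show that, for $x\in D(A)$, the unique classical solution of {\upshape (CP)} is simply $y(\,\cdot\,,x):=S(\,\cdot\,)x$, and that parts (ii) and (iii) are then just quantitative restatements of regularity already proved for the semigroup $S$. So after setting $y(t,x):=S(t)x$, everything reduces to three ingredients established above: Theorem \ref{generatormain} (differentiability on the domain), Theorem \ref{thm:uniqueness1} (uniqueness), and Corollary \ref{loclipschitz} (local Lipschitz continuity of $S(t)$, uniformly for $t$ in a compact interval).

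For existence in (i), I would fix $x\in D(A)$ and verify that $y(t):=S(t)x$ does the job. By Theorem \ref{generatormain}(i), $y(t)\in D(A)$ for every $t\ge 0$; by Theorem \ref{generatormain}(iii), the map $t\mapsto S(t)x$ is continuously differentiable with $\frac{{\rm d}}{{\rm d}t}S(t)x=AS(t)x=S^\prime_\pm(t,x)Ax$, so $y\in C^1([0,\infty);X)$ and $y'(t)=Ay(t)$, $y(0)=x$. Finally $Ay(t)=AS(t)x=S^\prime_+(t,x)Ax$ is continuous in $t$, again by Theorem \ref{generatormain}(iii), so $Ay\in C([0,\infty);X)$. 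This produces a classical solution with all the asserted regularity.

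For uniqueness in (i), I would take any classical solution $\tilde y\in C^1([0,\infty);X)$, i.e.\ $\tilde y(t)\in D(A)$ and $\tilde y'(t)=A\tilde y(t)$ for all $t\ge 0$ with $\tilde y(0)=x$, and observe that such a $\tilde y$ in particular satisfies the weaker, one-sided hypothesis of Theorem \ref{thm:uniqueness1}: it is continuous, takes values in $D(A)$, and $\lim_{h\downarrow 0}\frac{\tilde y(t+h)-\tilde y(t)}{h}=\tilde y'(t)=A\tilde y(t)$. Theorem \ref{thm:uniqueness1} then forces $\tilde y(t)=S(t)x=y(t)$ for all $t\ge 0$.

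For (ii), I would apply Corollary \ref{loclipschitz} to $T>0$ and $x_0\in D(A)\subset X$ to get $L\ge 0$, $r>0$ with $\sup_{t\in[0,T]}\|S(t)u-S(t)v\|\le L\|u-v\|$ for $u,v\in B(x_0,r)$; specializing to $u=x$, $v=z$ with $x,z\in D(A)\cap B(x_0,r)$ and recalling $y(t,x)=S(t)x$ gives $\sup_{t\in[0,T]}\|y(t,x)-y(t,z)\|\le L\|x-z\|$, which is (ii). For (iii), I would fix $t>0$ and $r>0$ and simply use that $S(t)$ is a bounded operator by Definition \ref{defstochsemi}, so $C:=\|S(t)\|_r=\sup_{\|u\|\le r}\|S(t)u\|<\infty$ and $\|y(t,x)\|=\|S(t)x\|\le C$ whenever $\|x\|\le r$. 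The hard part — if there is one — is essentially absent: the corollary repackages earlier results, and the only spot that warrants care is the uniqueness argument, namely confirming that a two-sided $C^1$ solution lies in the (larger) class to which Theorem \ref{thm:uniqueness1} applies, so that $S(\,\cdot\,)x$ is the unique solution precisely among the functions described in (i).
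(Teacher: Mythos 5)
Your proposal is correct and follows exactly the paper's own argument: the solution is $y(t)=S(t)x$, with existence and regularity from Theorem \ref{generatormain}, uniqueness from Theorem \ref{thm:uniqueness1} (noting, as you rightly do, that a classical solution satisfies the weaker one-sided derivative hypothesis of that theorem), part (ii) from Corollary \ref{loclipschitz}, and part (iii) from the boundedness of $S(t)$. The paper's proof is a condensed version of precisely these steps.
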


\begin{proof}
 By Theorem \ref{generatormain} and Theorem \ref{thm:uniqueness1}, it follows that, for every $x\in D(A)$, the Cauchy problem (CP) has a unique classical solution $y\in C^1([0,\infty);X)$ such that $y(t)\in D(A)$ for all $t\geq 0$ and $Ay\in C([0,\infty);X)$, and which is given by $y(t) = S(t)x$. By Corollary \ref{loclipschitz}, we obtain (ii), and (iii) is the boundedness of the operator $S(t)$.
\end{proof}

\begin{remark}
 Assume that for some operator $A_0\colon D(A_0)\subset X\to X$  the abstract Cauchy problem is well-posed in the sense of Corollary \ref{cor:wellposed}. Let the domain $D(A_0)$ be a dense linear subspace of $X$, and assume that
 the map $D(A_0)\to X,\;x\mapsto y(t,x)$ is convex for all $t\geq 0$. Then, there exists a unique convex $C_0$-semigroup $S=(S(t))_{t\in [0,\infty)}$ with $S(t)x=y(t,x)$ for all $x\in D(A_0)$. Moreover, $A_0\subset A$, where $A$ is the generator of $S$, and $D(A_0)$ is $S(t)$-invariant for all $t\geq 0$, i.e., $S(t)x\in D(A_0)$ for all $t\geq 0$ and $x\in D(A_0)$.

 In fact, we can define the operator $S(t)x:=y(t,x)$ for all $t\geq 0$ and $x\in D(A_0)$. As $S(t)$ is bounded by (iii) and convex, it is Lipschitz on bounded subsets of $D(A_0)$ by Corollary \ref{cor:Lip}. Therefore, there exists a unique continuous extension $S(t)\colon X\to X$, which again is bounded and convex. By the uniqueness in (i), the semigroup property for the family $S=(S(t))_{t\in [0,\infty)}$ holds for all $x\in D(A_0)$, and therefore for all $x\in X$.
 Similarly, the strong continuity follows by $y(\cdot,x)\in C([0,\infty); X)$ for $x\in D(A_0)$ and (ii). Finally, as, for every $x\in D(A_0)$, the function $y(\cdot,x)$ is differentiable at zero with derivative $Ax$, we obtain $D(A_0)\subset D(A)$ with $A|_{D(A_0)} = A_0$ as well as, by (i), the invariance of $D(A_0)$ under $S(t)$.

 In this way, we can construct a convex $C_0$-semigroup by solving the Cauchy problem only for initial values $x\in D(A_0)$. In applications, one might have $D(A_0)$ being much smaller than $D(A)$.
\end{remark}

\begin{remark}
 We would like to point out that very little can be said about structural properties of the domain $D(A)$ when $S$ is nonlinear. If $S$ is sublinear, the generator and the domain scale with positive multiples, i.e., $\la x\in D(A)$ with $A(\la x)=\la Ax$ for all $x\in D(A)$ and $\la\geq 0$, which is a direct consequence of positive homogeneity of the semigroup. Although in typical situations, when $X=L^p$, the domain contains a dense subspace, which, in most applications, is the space $C_c^\infty$ of all smooth functions with compact support, the density of the domain, in a general setting, remains an open question. Considering the semigroup envelope for two linear semigroups for which the intersection of the domains consists of only $0$, suggests that the domain should fail to be dense in general. A less pathological case which is not covered by the setup in this section is given by the semigroup envelope for a family of heat semigroups with varying covariance operator on the space of all bounded uniformly continuous functions on a separable Hilbert space $H$ as in \cite{dkn2} and \cite{roecknen}, and also suggests that the domain is typically not dense.
\end{remark}

\section{Semigroup envelopes}\label{sec.envelopes}
As in the previous section, we assume that $X$ has an order continuous norm implying that $X$ is Dedekind super complete (see beginning of Section~\ref{sec:dedcomplete}). For two semigroups $S$ and $T$ on $X$, we write $S\leq T$ if
\[
 S(t)x\leq T(t)x\quad \text{for all }t\geq 0\text{ and }x\in X.
\]
We would like to point out that our definition of dominance for two semigroups is not consistent with the notion of dominance for linear semigroups. If $S$ and $T$ are both linear, $S\leq T$ implies that $S=T$. Our definition of $\leq$ is therefore only nontrivial, in the sense that it is a strict inequality, if $S$ or $T$ are nonlinear.

Throughout this section, let $(S_\lambda)_{\lambda\in\Lambda}$ be a family of convex monotone semigroups on $X$. We say that a semigroup $S$ is an \textit{upper bound} of $(S_\lambda)_{\lambda\in\Lambda}$ if $S\geq S_\la$ for all $\la\in \La$.

\begin{definition}
 We call a semigroup $S$ (if existent) the \textit{(upper) semigroup envelope} of $(S_\lambda)_{\lambda\in\Lambda}$ if it is the smallest upper bound of $(S_\lambda)_{\lambda\in\Lambda}$, i.e. if $S$ is an upper bound of $(S_\lambda)_{\lambda\in\Lambda}$ and $S\leq T$ for any other upper bound $T$ of $(S_\lambda)_{\lambda\in\Lambda}$.
\end{definition}

Notice that the definition of a semigroup envelope already implies its uniqueness. However, the existence of a semigroup envelope is not given in general. In \cite{dkn2} and \cite{roecknen} the existence of a semigroup envelope, under certain conditions, has been shown for families of semigroups on spaces of uniformly continuous functions. This is done following an idea of Nisio \cite{MR0451420}, who was, to the best of our knowledge, the first to investigate the existence of semigroup envelopes. A related construction is the one of a modulus for linear $C_0$-semigroups by Becker and Greiner \cite{MR868254}. It was shown (cf.~\cite{dkn2},\cite{roecknen},\cite{MR0451420}) that, for $C_0$-semigroups, there is a relation between the semigroup envelope, that is the supremum, of a family of semigroups and the pointwise supremum of their generators. In this subsection, we now want to show that the construction of Nisio, which is a pointwise optimization on a finer and finer time-grid, can be realized on Dedekind super complete Banach lattices. Moreover, we show that the ansatz proposed by Nisio is in fact the only way to construct the supremum of a family of semigroups. We further show that, under certain conditions, the semigroup envelope is a convex monotone $C_0$-semigroup, which makes the results from the previous subsection applicable. In view of the examples in \cite{dkn2} and \cite{roecknen}, this could be the starting point for $L^p$-semigroup theory for a class of Hamilton-Jacobi-Bellman equations.

In the sequel, we consider finite partitions $P:= \{\pi\subset[0,\infty): 0\in\pi, \, \pi\text{ finite}\}$. For a partition $\pi=\{t_0,t_1,\dots,t_m\}\in P$ with $0=t_0< t_1< \ldots < t_m$, we define $|\pi|_\infty := \max_{j=1,\dots,m} (t_j-t_{j-1})$, and we set $|\{0\}|_\infty=0$. The set of partitions with end-point $t$ is denoted by $P_t$, i.e., $P_t := \{\pi \in P: \max \pi = t\}$.

Assume that the set $\{S_\la (t)x\colon  \la \in \La\}$ is bounded above for all $x\in X$ and all $t>0$. Let $x\in X$. Then, we set
\[
 J_h x:= \sup_{\la \in \La }S_\la (h)x
\]
for all $h>0$ and
\[
 J_\pi x:=J_{t_1-t_0}\cdots J_{t_m- t_{m-1}} x
\]
for any partition $\pi=\{t_0,t_1,\dots,t_m\}\in P$ with $0=t_0< t_1< \ldots < t_m$. Notice that, for $x\in X$ and $h_1,h_2\geq 0$,
 \[
  S_\lambda(h_1+h_2)x=S_\lambda(h_1) S_\lambda(h_2)x\leq J_{h_1}J_{h_2}x
 \]
 for all $\lambda\in \Lambda$, which implies that $J_{h_1+h_2}x\leq  J_{h_1}J_{h_2}x$. In particular,
 \begin{equation}\label{eq.finerpart}
  J_{\pi_1}x\leq J_{\pi_2}x
 \end{equation}
 for all $x\in X$ and $\pi_1,\pi_2\in P$ with $\pi_1\subset \pi_2$.

\begin{theorem}\label{thm:envelope}
 Assume that, for all $t\geq 0$, there is a bounded operator $C(t)\colon X\to X$ with $J_\pi x\leq C(t)x$ for all $\pi \in P_t$ and $x\in X$.~Then, the semigroup envelope $S=(S(t))_{t\in [0,\infty)}$ of $(S_\lambda)_{\lambda\in\Lambda}$ exists, is a convex monotone semigroup, and is given by
 \begin{equation}\label{envelope}
  S(t)x=\sup_{\pi \in P_t}J_\pi x
 \end{equation}
 for all $t\geq 0$ and $x\in X$. If $C(t)x\to x$ as $t\downarrow 0$ for all $x\in X$ and $S_{\la_0}$ is a $C_0$-semigroup for some $\la_0\in \La$, then $S$ is a $C_0$-semigroup.~Moreover, if $S_\la$ is sublinear for all $\la\in \La$, then the semigroup envelope $S$ is sublinear.
\end{theorem}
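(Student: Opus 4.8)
The plan is to verify the semigroup envelope properties for the candidate $S(t)x := \sup_{\pi\in P_t}J_\pi x$ in the order: well-definedness and boundedness, the two semigroup axioms (S1) and (S2), convexity and monotonicity, the upper-bound and minimality properties that identify $S$ as \emph{the} envelope, then strong continuity under the extra hypotheses, and finally sublinearity. First I would observe that the supremum in \eqref{envelope} is well-defined: by hypothesis $\{J_\pi x : \pi\in P_t\}$ is bounded above by $C(t)x$, and $X$ is Dedekind super complete, so the supremum exists in $X$; moreover $\|S(t)x\| = \|\sup_{\pi\in P_t}J_\pi x\|$ can be controlled by combining $J_\pi x \le C(t)x$ with a corresponding lower bound (e.g.\ $J_\pi x \ge S_{\lambda_0}(t)x$ if some $S_{\lambda_0}$ exists, or simply $J_\pi x \ge J_{\{0,t\}}x = J_t x$), which together with order continuity of the norm and boundedness of $C(t)$ gives $\|S(t)\|_r<\infty$ for all $r>0$. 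Axiom (S1) is immediate since $P_0=\{\{0\}\}$ and $J_{\{0\}}=\id$.

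The semigroup property (S2) is the technical heart. The key combinatorial fact is that $P_{t+s}$ is in bijection with pairs $(\pi_1,\pi_2)\in P_s\times P_t$ via $\pi_1\cup(s+\pi_2)$, where $s+\pi_2$ denotes the shift, and along this correspondence $J_{\pi_1\cup(s+\pi_2)}x = J_{\pi_1}\bigl(J_{\pi_2}x\bigr)$ — wait, more precisely one writes a partition of $[0,t+s]$ as a refinement containing the point $s$ (using the monotonicity \eqref{eq.finerpart}, adding $s$ only increases $J_\pi x$, so the supremum over $P_{t+s}$ equals the supremum over those partitions containing $s$), and such a partition splits as a partition of $[0,s]$ followed by a partition of $[s,t+s]$, giving $J_\pi x = J_{\pi_1}J_{\pi_2'}x$ with $\pi_1\in P_s$ and $\pi_2'$ a partition of the second block. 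Then $S(t+s)x = \sup_{\pi_1\in P_s}\sup_{\pi_2}J_{\pi_1}J_{\pi_2}x$. To pull the inner supremum through $J_{\pi_1}$ one needs that each $J_h$, hence each $J_{\pi_1}$, is \emph{monotone} and \emph{continuous from below along the order} (order-continuity / the fact that convex monotone bounded operators respect suprema of upward-directed sets, which on a space with order-continuous norm follows from Dini-type arguments and Corollary~\ref{cor:Lip}); since $\{J_{\pi_2}x:\pi_2\in P_t\}$ is upward-directed by \eqref{eq.finerpart}, we get $J_{\pi_1}\bigl(\sup_{\pi_2}J_{\pi_2}x\bigr) = \sup_{\pi_2}J_{\pi_1}J_{\pi_2}x = \sup_{\pi_2}J_{\pi_1}\bigl(S(t)x\bigr)$-style manipulation, and taking $\sup_{\pi_1}$ yields $S(t+s)x = \sup_{\pi_1\in P_s}J_{\pi_1}\bigl(S(t)x\bigr) = S(s)S(t)x$. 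I expect \textbf{this interchange of $J_{\pi_1}$ with the directed supremum} to be the main obstacle, requiring the order-continuity of the norm in an essential way.

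Convexity and monotonicity of $S(t)$ follow because each $J_h = \sup_\lambda S_\lambda(h)$ is a supremum of convex monotone operators hence convex and monotone, compositions $J_\pi$ preserve both, and a supremum of convex monotone operators is again convex and monotone. For the envelope property: $S\ge S_\lambda$ since $S(t)x \ge J_{\{0,t\}}x = J_t x \ge S_\lambda(t)x$; and if $T$ is any semigroup with $T\ge S_\lambda$ for all $\lambda$, then $T$ monotone (wait — $T$ need not be assumed monotone, but one argues directly) gives $T(h)x \ge S_\lambda(h)x$ for all $\lambda$, so $T(h)x \ge J_h x$; iterating along a partition $\pi=\{t_0,\dots,t_m\}$ and using the semigroup property of $T$ together with monotonicity of $T(t_1-t_0)\cdots$ — here one does need $T$ monotone, which should be part of the ambient assumption on "any other upper bound" or is forced; in any case $T(t)x = T(t_1-t_0)\cdots T(t_m-t_{m-1})x \ge J_{t_1-t_0}\cdots J_{t_m-t_{m-1}}x = J_\pi x$, whence $T(t)x \ge \sup_{\pi\in P_t}J_\pi x = S(t)x$. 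For strong continuity, given $C(t)x\to x$ and $S_{\lambda_0}$ a $C_0$-semigroup, one sandwiches $S_{\lambda_0}(t)x \le J_t x \le S(t)x \le C(t)x$ and lets $t\downarrow 0$; combined with Corollary~\ref{cor:cont}-type monotonicity in $t$ of $S(t)x$ (which follows from \eqref{eq.finerpart} since $P_t$ embeds into $P_{t'}$ for $t<t'$ after adjoining $t$) this forces $S(t)x\to x$. Finally, if every $S_\lambda$ is sublinear then each $S_\lambda(h)$ is positively homogeneous, hence so is each $J_h$ and each composition $J_\pi$, and a supremum of positively homogeneous convex operators is positively homogeneous and convex, i.e.\ sublinear; thus $S(t)$ is sublinear.
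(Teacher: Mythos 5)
Your overall strategy coincides with the paper's: well-definedness from the upper bound $C(t)$ and Dedekind super completeness, the split of a partition of $[0,t+s]$ at the intermediate time for one inequality of (S2), and a norm-approximation of the directed supremum for the reverse inequality. In particular you correctly identified the genuine obstacle, namely passing $J_{\pi_1}$ through the upward-directed supremum; your resolution (directed order convergence implies norm convergence by order continuity of the norm, then apply norm continuity of the bounded convex operator $J_{\pi_1}$ via Lemma~\ref{lemma1}/Corollary~\ref{cor:Lip}) is essentially the paper's argument, except that the paper first extracts a countable increasing sequence $J_{\pi_n^*}x\uparrow S(s)x$ using super completeness and then invokes $\sigma$-order continuity; the two routes are interchangeable here.

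Two points should be repaired. First, in the minimality step you leave a loose end: you iterate $T(t)x=T(h_1)\cdots T(h_m)x\ge J_{h_1}\cdots J_{h_m}x$ by pushing the inequality through the operators $T(h_j)$, which would indeed require $T$ to be monotone, and you then speculate that monotonicity of $T$ must be assumed or ``is forced.'' Neither is needed: do the induction through the $J$'s instead. If $y:=J_{h_2}\cdots J_{h_m}x\le z:=T(h_2)\cdots T(h_m)x$, then $J_{h_1}y\le J_{h_1}z\le T(h_1)z$, using first the monotonicity of $J_{h_1}$ (which holds because every $S_\lambda$ is assumed monotone) and then $J_{h}w\le T(h)w$ for all $w$. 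This is how the paper argues, and it places no hypothesis on $T$ beyond being an upper-bound semigroup. Second, your parenthetical claim that $t\mapsto S(t)x$ is monotone because ``$P_t$ embeds into $P_{t'}$ after adjoining a point'' is not justified: \eqref{eq.finerpart} compares partitions with the same endpoint (it comes from inserting interior points via $J_{h_1+h_2}\le J_{h_1}J_{h_2}$), and appending a point beyond the endpoint changes $J_\pi x$ to $J_\pi(J_{t'-t}x)$, which need not dominate $J_\pi x$. Fortunately this claim is superfluous: the sandwich $S_{\lambda_0}(t)x-x\le S(t)x-x\le C(t)x-x$ alone yields $\|S(t)x-x\|\le\|S_{\lambda_0}(t)x-x\|+\|C(t)x-x\|\to0$, which is all that (S3) requires.
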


\begin{proof}
 Clearly, we have that $S_\la (h)x\leq J_h x$ for all $\la \in \La$, $h>0$ and all $x\in X$. Moreover, since $S_\la$ is montone and convex for all $\la\in \La$, it follows that $J_h$ is monotone and convex for all $h\geq 0$. Consequently, $J_\pi$ is monotone and convex with $S_\la (t)x\leq J_\pi x \leq C(t)x$ for all $\la \in \La$, $t\geq 0$, $\pi\in P_t$ and $x\in X$, showing that $S=(S(t))_{t\geq 0}$, given by \eqref{envelope}, is well-defined, monotone, convex and an upper bound of the family $(S_\la)_{\la\in \La}$. Moreover, one directly sees that $S$ is sublinear as soon as all $S_\la$ are sublinear. From
 \[
  S_{\la_0} (t)x\leq S(t)x\leq C(t)x \quad \text{and}\quad S_{\la_0} (t)x-x\leq S(t)x-x\leq C(t)x-x,
 \]
 it follows that
 \[
  \|S(t)x\|\leq \|S_{\la_0} (t)x\|+\|C(t)x\|
 \]
 and
 \[
  \| S(t)x-x\|\leq \| S_{\la_0} (t)x-x\|+\|C(t)x-x\|
 \]
 for all $t\geq 0$, $x\in X$ and some $\la_0\in \La$. This implies that $S(t)$ is bounded for all $t\geq 0$, and that $\lim_{t\downarrow 0}S(t)x=x$ as soon as $C(t)x\to x$ as $t\downarrow 0$ and $S_{\la_0}$ is a $C_0$-semigroup for some $\la_0\in \La$. Next, we show that $S=(S(t))_{t\geq 0}$, defined by \eqref{envelope}, is a semigroup. Clearly, $S(0)x=x$ for all $x\in X$. In order to show that $S(t+s)=S(t)S(s)$ for all $s,t\geq 0$, let $s,t\geq 0$ and $x\in X$. Then, it is easily seen that $S(t+s)x\leq S(t)S(s)x$ since, by Equation \eqref{eq.finerpart}, for all $\pi\in P_{t+s}$,
 \[
  J_\pi x\leq J_{\pi_0}J_{\pi_1}x,
 \]
 where $\pi_0:=\{u\in \pi : u\leq t\}\cup\{t\}$ and $\pi_1:=\{u-t: u\in \pi, u\geq t\}\cup \{0\}$. On the other hand, there exists a sequence $(\pi_n)_n$ in $P_s$ with $S(s)x=\sup_{n\in \N}J_{\pi_n}x$. Defining
 \[
  \pi_n^*:=\bigcup_{k=1}^n \pi_k
 \]
for all $n\in \N$, we obtain that $J_{\pi_n^*}x\to S(s)x$, by the $\sigma$-order continuity of the norm. Consequently,
\[
 J_\pi S(s)x=\lim_{n\to\infty} J_\pi J_{\pi_n^*}x\leq S(t+s)x
\]
for all $\pi\in P_t$, where, in the first equality, we used the fact that $J_\pi$ is continuous since, by Lemma \ref{lemma1}, it is convex and bounded. Taking the supremum over all $\pi\in P_t$, we obtain that $S(t)S(s)x\leq S(t+s)x$.

Finally, let $T$ be an upper bound of $(S_\la)_{\la\in \La}$. Then, $J_h x \leq T(h)x$ for all $h>0$ and all $x\in X$ and consequently $J_\pi x \leq T(t)x$ for all $t\geq 0$, $\pi\in P_t$ and $x\in X$, which shows that $S(t)x\leq T(t)x$ for all $t\geq 0$ and $x\in X$.
\end{proof}

\begin{remark}
The proof of Theorem \ref{thm:envelope} shows that under the additional assumption that $X$ is a KB-space (cf.\ \cite[Chapter 7]{wnuk1999banach}), i.e., a Banach lattice in which every norm bounded increasing net in $X$ is norm convergent, the existence of the semigroup envelope can be established under the slightly weaker condition
\begin{equation}\label{eq.normjpi}
\sup_{\pi\in P_t}\|J_\pi x\| <\infty
\end{equation}
for all $x\in X$ and $t\geq 0$, instead of $J_\pi x\leq C(t)x$ for all $\pi \in P_t$ and $x\in X$. A condition ensuring the strong continuity in this case would be
\begin{equation}\label{eq.normjpi2}
 \sup_{\pi\in P_t}\|J_\pi x-x\| \to 0\quad \text{as } t\to 0
\end{equation}
for all $x\in X$ instead of $C(t)x\to x$ as $t\to 0$. Although every $L^p$-space, for $p\in [1,\infty)$, is a KB-space (cf. \cite[Corollary 2.4.13]{MR1128093}), \eqref{eq.normjpi2} is usually not a very handy condition, and the pointwise estimate in terms of $C(t)$ gives additional possibilities to verify the strong continuity, see, for instance, Theorem \ref{thm:lpenvelope} (ii), below.
\end{remark}

\begin{corollary}
 Let the semigroup $T$ be an upper bound of the family $(S_\lambda)_{\lambda\in\Lambda}$. Then, the semigroup envelope of $(S_\lambda)_{\lambda\in\Lambda}$ exists and is given by
 \eqref{envelope}. If $T$ is a $C_0$-semigroup and $S_{\la_0}$ is a $C_0$-semigroup for some $\la_0 \in \La$, then $S$ is a $C_0$-semigroup.
\end{corollary}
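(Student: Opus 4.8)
The plan is to obtain this as an immediate consequence of Theorem \ref{thm:envelope} by producing the operators $C(t)$ required in its hypothesis directly from $T$.

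First I would record that, since $T$ is an upper bound of $(S_\la)_{\la\in\La}$, for every $x\in X$ and $t>0$ we have $S_\la(t)x\le T(t)x$ for all $\la\in\La$. In particular the set $\{S_\la(t)x : \la\in\La\}$ is bounded above, so $J_h x=\sup_{\la\in\La}S_\la(h)x$ is well-defined in $X$ and satisfies $J_h x\le T(h)x$ for all $h>0$ and $x\in X$; thus the standing assumption preceding Theorem \ref{thm:envelope} is in force.

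Next I would set $C(t):=T(t)$ for all $t\ge 0$. By Definition \ref{defstochsemi}, each $T(t)$ is a bounded operator $X\to X$, so this is an admissible choice, and it remains to verify $J_\pi x\le C(t)x$ for all $\pi\in P_t$ and $x\in X$. For $\pi=\{t_0,\dots,t_m\}$ with $0=t_0<\dots<t_m=t$ I would argue by iterating from the innermost factor: starting from $J_{t_m-t_{m-1}}x\le T(t_m-t_{m-1})x$, applying the monotone operator $J_{t_{m-1}-t_{m-2}}$ and then the pointwise bound $J_{t_{m-1}-t_{m-2}}y\le T(t_{m-1}-t_{m-2})y$ with $y=T(t_m-t_{m-1})x$, and continuing, one gets
\[
J_\pi x=J_{t_1-t_0}\cdots J_{t_m-t_{m-1}}x\le T(t_1-t_0)\cdots T(t_m-t_{m-1})x=T(t)x,
\]
the last equality by the semigroup property (S2) of $T$. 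Hence the hypothesis of Theorem \ref{thm:envelope} holds with this $C$, which yields the existence of the semigroup envelope $S$ together with the representation \eqref{envelope}. For the final assertion I would simply note that if $T$ is a $C_0$-semigroup then $C(t)x=T(t)x\to x$ as $t\downarrow 0$ for every $x\in X$, so the extra hypothesis on $C$ in Theorem \ref{thm:envelope} is met; combined with $S_{\la_0}$ being a $C_0$-semigroup for some $\la_0\in\La$, Theorem \ref{thm:envelope} gives that $S$ is a $C_0$-semigroup.

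There is essentially no obstacle, since the substance is already contained in Theorem \ref{thm:envelope}; the only point deserving a careful (one-line) treatment is the iterated estimate $J_\pi x\le T(t)x$, which must be carried out using only the monotonicity of the operators $J_h$ and the pointwise domination $J_h\le T(h)$, as $T$ itself is not assumed to be monotone.
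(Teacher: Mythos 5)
Your proof is correct and follows essentially the same route as the paper: take $C(t):=T(t)$, establish $J_\pi x\le T(t)x$ by iterating the pointwise bound $J_h\le T(h)$ through the monotone operators $J_h$, and invoke Theorem \ref{thm:envelope}. Your explicit remark that the iteration must rely on the monotonicity of $J_h$ (inherited from the $S_\la$) rather than on any monotonicity of $T$ is a careful touch that the paper leaves implicit by referring back to the last paragraph of the proof of Theorem \ref{thm:envelope}.
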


\begin{proof}
 As we saw in the proof of the previous theorem, $S_\la (t)x\leq J_\pi x \leq T(t)x$ for all $\la \in \La$, $t\geq 0$, $\pi\in P_t$ and $x\in X$. Therefore, the upper bound $C(t)$ in the previous theorem can be chosen to be $T(t)$.
\end{proof}

\begin{corollary}
 Let $S$ be the semigroup envelope of the family $(S_\lambda)_{\lambda\in\Lambda}$. Then,
 \[
  S(t)x=\sup_{\pi \in P_t}J_\pi x
 \]
 for all $t\geq 0$ and $x\in X$.
\end{corollary}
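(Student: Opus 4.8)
The plan is to reduce this statement to the preceding corollary together with the uniqueness of the semigroup envelope noted right after its definition. First I would observe that $S$, being the semigroup envelope of $(S_\lambda)_{\lambda\in\Lambda}$, is in particular an upper bound of this family. Consequently, for every $x\in X$ and every $t>0$, the set $\{S_\lambda(t)x:\lambda\in\Lambda\}$ is bounded above by $S(t)x$, so that $J_hx=\sup_{\lambda\in\Lambda}S_\lambda(h)x$ is a well-defined element of $X$ (here we use that $X$ has order continuous norm and is therefore Dedekind super complete), and likewise $J_\pi x\leq S(t)x$ for all $\pi\in P_t$. In other words, the standing assumption under which $J_h$ and $J_\pi$ were introduced is automatically in force, and moreover the hypothesis of the previous corollary is met with the upper bound $T:=S$.

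Next I would simply invoke that previous corollary with this choice of $T$: it asserts that the semigroup envelope of $(S_\lambda)_{\lambda\in\Lambda}$ exists and is given by the right-hand side of \eqref{envelope}. Since the semigroup envelope is unique, the semigroup defined by \eqref{envelope} must coincide with $S$, which yields
\[
 S(t)x=\sup_{\pi\in P_t}J_\pi x\qquad\text{for all }t\geq 0\text{ and }x\in X.
\]

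There is essentially no obstacle here; the proof is a one-line consequence of the earlier results. The only point that deserves a moment's care — and which the first step above records — is that the boundedness-above condition needed to even define $J_h$ and $J_\pi$ is not an extra assumption but follows from the mere existence of $S$ as an upper bound of the family.
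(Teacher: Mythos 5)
Your argument is correct and is exactly the intended one: the paper leaves this corollary without proof precisely because it follows by applying the preceding corollary with $T:=S$ and then invoking the uniqueness of the semigroup envelope. Your additional remark that the existence of $S$ as an upper bound already guarantees the boundedness-above condition needed to define $J_h$ and $J_\pi$ is a worthwhile point of care, not a deviation from the paper's route.
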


\section{Convolution semigroups on $L^p$}\label{sec:convsglp}

Let $d\in \N$. In \cite{dkn2}, the semigroup envelope, discussed in the previous section, has been constructed for a wide class of L\'evy processes. In \cite[Example 3.2]{dkn2}, the authors consider families $(S_\la)_{\la\in \La}$ of linear semigroups on the space $\BUC=\BUC(\R^d)$ of bounded uniformly continuous functions, which are indexed by a L\'evy triplet $\la=(b,\Si,\mu)$. Recall that a L\'evy triplet $(b,\Si,\mu)$ consists of a vector $b\in \R^d$, a symmetric positive semidefinite matrix $\Si\in \R^{d\times d}$ and a L\'evy measure $\mu$ on $\R^d$. For each L\'evy triplet $\la$, the semigroup $S_\la$ is the one generated by the transition kernels of a L\'evy process with L\'evy triplet $\la$. More precisely,
\begin{equation}\label{levsg}
 \big(S_\la (t)f\big)(x):=\E\big[f(x+L_t^\lambda)\big]
\end{equation}
for $t\geq 0$, $f\in \BUC$ and $x\in \R^d$, where $L_t^\lambda$ is a L\'evy process on a probability space $(\Om,\FF,\P)$ with L\'evy triplet $\la$. In \cite[Example 3.2]{dkn2}, it was shown that, under the condition
\begin{equation}\label{levcond}
 \sup_{(b,\Si,\mu)\in \La} |b|+|\Si|+\int_{\R^d\setminus \{0\}}1\wedge |x|^2\, {\rm d}\mu(x)<\infty,
\end{equation}
the semigroup envelope $S_{\BUC}$ for the family $(S_\la)_{\la\in \La}$ exists and that in this case (cf. \cite[Lemma 5.10]{dkn2})
\begin{equation}\label{eq:bucgen}
 \lim_{h\downarrow 0}\bigg\|\frac{S_{\BUC}(h)f-f}{h}-\sup_{\la\in \La}A_\la f\bigg\|_\infty=0\quad \text{for }f\in \BUC^2.
\end{equation}
Here, $\BUC^2=\BUC^2(\R^d)$ is the space of all twice differentiable functions with bounded uniformly continuous derivatives up to order $2$ and $A_\la$ is the generator of the semigroup $S_\la$ for each $\la\in \La$. Notice that the setup in \cite{dkn2} is not contained in the setup of the previous subsection since $\BUC$ is not Dedekind super complete and does not possess a $\sigma$-order continuous norm. Recall that, for each L\'evy triplet $\la$, \eqref{levsg} also gives rise to a linear monotone $C_0$-semigroup on $L^p=L^p(\R^d)$, which will again be denoted by $S_\la$ (cf. \cite[Theorem 3.4.2]{MR2512800}). Therefore, the question arises if under a similar condition as \eqref{levcond}, the semigoup envelope of the family $(S_\la)_{\la\in \La}$ can be constructed on $L^p$. In general, the answer to this question is negative as the following example shows.

\begin{example}[Uncertain shift semigroup]\label{ex:robustshift}
 Let $d=1$ and $(S_\la (t)f)(x):=f(x+t\la)$ for $\la\in \La:=[-1,1]$, $t\geq 0$, $f\in L^p(\R)$ and $x\in \R$. Then, for $f\in L^p(\R)$ given by $f(x)=|x|^{-1/2p}1_{[-1,1]}(x)$,
 \[
 \sup_{\la\in \La} (S_\la(t)f)(x)=\infty\quad \text{for all }t\geq 0\text{ and }x\in [-t,t].
 \]
 Therefore, the set $\{S_\la(t)f\colon \la\in \La\}$ does not have a least upper bound in $L^p$ for all $t>0$. In particular, the semigroup envelope of the family $(S_\la)_{\la\in \La}$ does not exist although the set $\La$ satisfies  condition \eqref{levcond}.
\end{example}

 In view of the previous example, additional conditions are required in order to guarantee the existence of the semigroup envelope on $L^p$. In the sequel, let $C_c^\infty$ denote the space of all $C^\infty$-functions $f\colon \R^d\to \R$ with compact support $\supp f$.

\begin{theorem}\label{thm:lpenvelope}
 Let $\La$ be a non-empty set of L\'evy triplets that satisfies \eqref{levcond}.
 \begin{itemize}
  \item[(i)] Assume that, for each $t>0$, there exists a bounded operator $C(t)\colon L^p\to L^p$ with
 \begin{equation}\label{eq:levcon2}
  J_\pi f\leq C(t)f\quad \text{for all }t> 0,\; \pi\in P_t\text{ and }f\in L^p.
 \end{equation}
 Then, the semigroup envelope $S$ of $(S_\la)_{\la\in \La}$ exists, and is a monotone sublinear semigroup.
 \item[(ii)] In addition to \eqref{eq:levcon2}, assume that
 \begin{equation}\label{eq:levcon3}
  \sup_{\la\in \La} A_\la f\in L^p\quad \text{for all }f\in C_c^\infty
 \end{equation}
 and that, for every $f\in C_c^\infty$ and every $\ep>0$, there exists a compact set $K\subset \R^d$ with $\supp f\subset K$ and
 \begin{equation}\label{eq:levcon4}
  \limsup_{h\downarrow 0}\bigg(\int_{\R^d\setminus K} \frac{\big|\big(C(h)f\big)(x)\big|^p}{h}\, {\rm d}x\bigg)^{1/p}\leq \ep.
 \end{equation}
 Then, the semigroup $S$ is a $C_0$-semigroup, $C_c^\infty\subset D(A)$ and
 \[
 Af=\sup_{\la\in \La}A_\la f
 \]
 for all $f\in C_c^\infty$, where $A$ denotes the generator of $S$.
 \end{itemize}
\end{theorem}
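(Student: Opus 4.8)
\emph{Overview and part (i).} The plan is to read off part (i) from Theorem \ref{thm:envelope}, and to obtain part (ii) by transferring the $\BUC$-result \eqref{eq:bucgen} (available under \eqref{levcond} by \cite[Example 3.2, Lemma 5.10]{dkn2}) to $L^p$, localising the transfer with the help of \eqref{eq:levcon3} and \eqref{eq:levcon4}. For part (i): since $p\in[1,\infty)$, the space $X=L^p=L^p(\R^d)$ is a Banach lattice with order continuous norm (in fact a KB-space), so the standing assumptions of Section \ref{sec.envelopes} are met. Each $S_\la$ is a \emph{linear} monotone $C_0$-semigroup on $L^p$, hence convex, monotone and sublinear, and \eqref{eq:levcon2} supplies, for each $t>0$, the bounded operator $C(t)$ with $J_\pi f\le C(t)f$ for all $\pi\in P_t$; in particular $\{S_\la(t)f:\la\in\La\}$ is bounded above for every $f$ and $t$, so $J_h$, $J_\pi$ and the right-hand side of \eqref{envelope} are well defined. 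Theorem \ref{thm:envelope} then gives that the semigroup envelope $S$ exists, is given by \eqref{envelope}, and is a monotone convex --- indeed sublinear, as all $S_\la$ are --- semigroup. Strong continuity of $S$ is not yet available (we do not assume $C(t)x\to x$) and is treated in part (ii).

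\emph{Part (ii), step 1 (identification with the $\BUC$-envelope on $C_c^\infty$).} Let $S_{\BUC}$ be the semigroup envelope of $(S_\la)_{\la\in\La}$ on $\BUC$, which exists by \eqref{levcond} and \cite[Example 3.2]{dkn2}. I claim that $S(t)f=S_{\BUC}(t)f$ a.e.\ for all $f\in C_c^\infty$ and $t\ge0$. The key point is that for $g\in\BUC\cap L^p$ the family $\{S_\la(h)g:\la\in\La\}$ is bounded by $\|g\|_\infty$ and equi-uniformly continuous (with the modulus of $g$), so its pointwise supremum lies in $\BUC$; by Arzel\`a--Ascoli this family is relatively compact, hence separable, for uniform convergence on compacts, so the pointwise supremum agrees with a countable supremum of a subfamily, which by \eqref{eq:levcon2} lies in $L^p$ and therefore equals $J_hg$ a.e. Applying this successively to the blocks of a partition (all intermediate iterates staying in $\BUC\cap L^p$, again by \eqref{eq:levcon2}), $J_\pi f$ formed in $\BUC$ and $J_\pi f$ formed in $L^p$ agree a.e.\ for every $\pi\in P_t$; passing to a common increasing $\subset$-cofinal sequence of partitions for the two envelopes and using $\sigma$-order continuity of the $L^p$-norm to identify the suprema over $P_t$ gives $S(t)f=\sup_{\pi\in P_t}J_\pi f=S_{\BUC}(t)f$ a.e.

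\emph{Part (ii), step 2 (strong continuity and the generator).} Fix $f\in C_c^\infty$ and choose, via \eqref{eq:levcon4} and \eqref{eq:levcon3}, a compact $K\supset\supp f$ so large that $\sup_\la A_\la f$ is small in $L^p(\R^d\setminus K)$. On $K$: by step 1 and strong continuity of $S_{\BUC}$ on $\BUC$, $S(h)f=S_{\BUC}(h)f\to f$ uniformly, and by \eqref{eq:bucgen}, $h^{-1}(S(h)f-f)\to\sup_\la A_\la f$ uniformly; since $|K|<\infty$, both hold in $L^p(K)$. On $\R^d\setminus K$, where $f=0$: $S_{\la_0}(h)f\le S(h)f\le C(h)f$ for any fixed $\la_0\in\La$, and $\|S_{\la_0}(h)f\|_{L^p(\R^d\setminus K)}\to0$ while $\|C(h)f\|_{L^p(\R^d\setminus K)}^p\le h(\ep^p+o(1))\to0$ by \eqref{eq:levcon4}, so $\|S(h)f\|_{L^p(\R^d\setminus K)}\to0$; hence $S(h)f\to f$ in $L^p$. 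This extends to all $g\in L^p$ by density of $C_c^\infty$, sublinearity of $S$ and the sandwich $S_{\la_0}(h)g\le S(h)g\le C(h)g$, so $S$ is a monotone sublinear $C_0$-semigroup and its generator $A$ is defined. For the generator identity on $C_c^\infty$: the lower bound $h^{-1}(S(h)f-f)\ge\max_{\la\in F}h^{-1}(S_\la(h)f-f)\to\max_{\la\in F}A_\la f$ in $L^p$ for finite $F\subset\La$ --- each $S_\la$ being a $C_0$-semigroup on $L^p$ with $C_c^\infty$ in its domain --- together with $\max_{\la\in F}A_\la f\uparrow\sup_\la A_\la f$ in $L^p$ (Dedekind super completeness and $\sigma$-order continuity of the norm), controls the negative part of $h^{-1}(S(h)f-f)-\sup_\la A_\la f$; the positive part is controlled on $K$ by the uniform convergence just used, and on $\R^d\setminus K$ by $S(h)f\le C(h)f$, the choice of $K$, \eqref{eq:levcon4}, and the uniform-in-$\la$ $L^p$-tail bounds for $S_\la(h)f$ following from \eqref{levcond}. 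Hence $h^{-1}(S(h)f-f)\to\sup_\la A_\la f$ in $L^p$, i.e.\ $C_c^\infty\subset D(A)$ with $Af=\sup_\la A_\la f$.

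\emph{Main obstacle.} The semigroup bookkeeping is routine; the real difficulty is the passage from $\BUC$ to $L^p$. Proving $S(t)f=S_{\BUC}(t)f$ a.e.\ on $C_c^\infty$ needs the equicontinuity/separability argument of step 1 and is exactly why one routes through the already-established convergence \eqref{eq:bucgen} instead of estimating $\sup_{\pi\in P_t}J_\pi f$ directly on $L^p$; and for the generator's upper bound one must control $h^{-1}(S(h)f-f)$ on $\R^d\setminus\supp f$ uniformly in $h$, which is precisely where \eqref{eq:levcon3} and \eqref{eq:levcon4} are indispensable (for $p=1$ the latter controls $h^{-1}C(h)f$ directly on the tail; for $p>1$ it must be combined with the uniform L\'evy-tail estimates from \eqref{levcond}).
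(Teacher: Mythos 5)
Your proposal follows the same route as the paper's proof: part~(i) is read off from Theorem~\ref{thm:envelope} exactly as in the paper, and for part~(ii) the paper likewise identifies $S(t)f$ with $S_{\BUC}(t)f$ for $f\in C_c^\infty$, invokes \eqref{eq:bucgen} on a compact $K\supset \supp f$ of finite volume to handle $\|\tfrac{S(h)f-f}{h}-Bf\|_{L^p(K)}$, and uses \eqref{eq:levcon3} and \eqref{eq:levcon4} for the two tail terms. Two local differences are worth recording. First, the paper asserts $S(t)f=S_{\BUC}(t)f$ in one line, whereas you supply an equicontinuity/separability argument identifying the lattice supremum in $L^p$ with the pointwise supremum; that is a legitimate filling-in of a step the paper leaves implicit. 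Second, for the sign control of $S(h)f$ on $\R^d\setminus K$ the paper simply applies \eqref{eq:levcon4} to $g=\pm f$ and uses $-S(h)f\le S(h)(-f)\le C(h)(-f)$, so that $|S(h)f|\le |C(h)f|+|C(h)(-f)|$ there; your finite-max/order-continuity argument for the negative part is correct but heavier than necessary, and in any case the positive part still has to be bounded through $C(h)f$.

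The one place where your write-up does not close is the tail estimate for $p>1$. You correctly note that \eqref{eq:levcon4} controls $h^{-1/p}\|C(h)f\|_{L^p(\R^d\setminus K)}$, while the difference quotient requires control of $h^{-1}\|S(h)f\|_{L^p(\R^d\setminus K)}$; these differ by the factor $h^{1/p-1}\to\infty$. Your proposed remedy, namely uniform-in-$\la$ L\'evy tail bounds from \eqref{levcond}, cannot repair this: the only upper bound available for $S(h)f=\sup_{\pi\in P_h}J_\pi f$ outside $K$ is $C(h)f$ (the individual $S_\la(h)f$ bound $S(h)f$ only from below), and $C(h)$ is an abstract operator about which nothing beyond \eqref{eq:levcon2} and \eqref{eq:levcon4} is assumed. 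For comparison, the paper's displayed estimate writes the tail contribution exactly as $\big(\int_{\R^d\setminus K}|S(h)f|^p/h\,\mathrm{d}x\big)^{1/p}$, which coincides with $h^{-1}\|S(h)f\|_{L^1(\R^d\setminus K)}$ when $p=1$ but is strictly smaller than $h^{-1}\|S(h)f\|_{L^p(\R^d\setminus K)}$ when $p>1$; in the paper's examples the tails of $C(h)f$ decay like $h^{s}$ for every $s$, so the stronger bound with $h^p$ in the denominator is in fact available there. So your diagnosis of where the difficulty sits is accurate, but as written your argument establishes the claim only for $p=1$ (or under a strengthened version of \eqref{eq:levcon4} with $h^p$ in place of $h$).
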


\begin{proof}
 (i) By Theorem \ref{thm:envelope}, it is clear that \eqref{eq:levcon2} implies the existence of the semigroup envelope $S$ and that the latter is monotone and sublinear.

 (ii) Let $f\in C_c^\infty$. We show that $f\in D(A)$ with $Af=\sup_{\la\in \La}A_\la f=:Bf$. Let $\ep>0$. By \eqref{eq:levcon3} and \eqref{eq:levcon4}, there exists some compact set $K\subset \R^d$ with $\supp f\subset K$ and
 \[
  \bigg(\int_{\R^d\setminus K} \big|\big(Bf\big)(x)\big|^p {\rm d}x\bigg)^{1/p}\le \frac{\ep}{4}\quad \text{and}\quad \bigg(\int_{\R^d\setminus K} \frac{\big|\big(C(h)g\big)(x)\big|^p}{h}\, {\rm d}x\bigg)^{1/p}\le \frac{\ep}{4}
 \]
 for $g=f,-f$ and $h>0$ sufficiently small. Since $f\in C_c^\infty\subset \BUC^2\cap L^p$, it follows that $S(t)f=S_{\BUC}(t)f$ for all $t\geq 0$. Hence, by \eqref{eq:bucgen},
\begin{align*}
 \bigg\|\frac{S(h)f-f}{h}-Bf\bigg\|_p&\leq {\rm vol}(K)^{1/p} \bigg\|\frac{S(h)f-f}{h}-Bf\bigg\|_\infty +\bigg(\int_{\R^d\setminus K} \big|\big(Bf\big)(x)\big|^p {\rm d}x\bigg)^{1/p}\\
 &\quad +\bigg(\int_{\R^d\setminus K}\frac{\big|\big(S(h)f\big)(x)\big|^p}{h}\, {\rm d}x\bigg)^{1/p}\le \ep
\end{align*}
 for $h>0$ sufficiently small, where ${\rm vol}(K)$ denotes the Lebesgue measure of $K$.

 In particular, $\|S(h)f-f\|_p\to 0$ for all $f\in C_c^\infty$.  Since $C_c^\infty$ is dense in $L^p$ and $S(t)\colon L^p\to L^p$ is continuous, this implies the strong continuity of $S$.
\end{proof}

Notice that the semigroup envelope from the previous theorem is exactly the extension of the semigroup envelope on $\BUC$, constructed in \cite{dkn2}, to the space $L^p$. More precisely, for each $t\geq 0$, the operator $S(t)$ is the unique bounded monotone sublinear operator $L^p\to L^p$ with $S(t)f=S_{\BUC}(t)f$ for all $f\in \BUC\cap L^p$. We will now give two examples of L\'evy semigroups $(S_\la)_{\la\in \La}$, where the semigroup envelope exists on $L^p$. The first one is a semilinear version of Example \ref{ex:robustshift}. The problem in Example \ref{ex:robustshift} arises due to shifting sufficiently integrable poles. In order to treat this problem, one first has to smoothen a given function $f\in L^p$ via a suitable normal distribution and then shift the smooth version of $f$. This results in the following example.

\begin{example}[$g$-expectation]\label{ex:smallg}
Let $d\in \N$, $p\in [1,\infty)$, and
\[
 \ph_\la(t,x):=(2\pi t)^{-d/2}e^{-\frac{|x+\la t|^2}{2t}} \quad \text{for }\la,x\in \R^d\text{ and }t> 0.
\]
For $\la\in \R^d$, we consider the linear $C_0$-semigroup $S_\la=(S_\la (t))_{t\in [0,\infty)}$ in $L^p = L^p(\R^d)$ given by
$S_\lambda(0)f= f$ and
\[
 \big(S_\la (t)f\big)(x):=\int_{\R^d}f(y)\ph_\la(t,x-y)\, {\rm d}y=\big(f\ast \ph_\la(t,\,\cdot\,)\big)(x)=\E\big[f(x+W_t+\la t )\big]
\]
for all $t> 0$, $f\in L^p$ and $x\in \R^d$, where $(W_t)_{t\in [0,\infty)}$ is a $d$-dimensional Brownian motion on a probability space $(\Om,\FF,\P)$. For each $\lambda\in \Lambda$, the generator $A_\lambda$ of $S_\lambda$ is given by
$D(A_\lambda) = W^{2,p}$ and
\[ A_\lambda f = \tfrac12 \Delta f + \lambda\cdot\nabla f\quad \text{for }f\in W^{2,p},\]
where $\Delta$ denotes the Laplacian,  `$\,\cdot\,$' is the scalar product in $\R^d$, and $W^{2,p} = W^{2,p}(\R^d)$ stands for the $L^p$-Sobolev space of order 2; see also \cite[Theorem~3.1.3]{Lunardi95} for the generation of a $C_0$-semigroup in $L^p$ and \cite[Theorem~31.5]{Sato99} for the connection between generator and L\'evy triplet. Now, let $\La\subset \R^d$ be bounded and non-empty, and define
\begin{equation}\label{eq:gexp1}
 \big(J_h f\big)(x):=\sup_{\la\in \La}\big(S_\la (h)f\big)(x)\quad \text{for }h\geq 0,\; f\in L^p\text{ and }x\in \R^d.
\end{equation}
Notice that, for $h>0$, $S_\la(h)f\in \BUC$ for all $f\in L^p$, which is why the supremum in \eqref{eq:gexp1} can be understood pointwise for $h>0$.

We show that the conditions of Theorem~\ref{thm:lpenvelope} are satisfied.
For the construction of an upper bound, we use the relation
\[
 \ph_\la(h,x-y)=e^{-\la \cdot (x-y)-h|\la|^2/2}\ph_0(h,x-y)
\]
for all $\la\in \R^d$, $h>0$ and $x,y\in \R^d$. With this and H\"older's inequality, it follows that
\begin{align*}
 \big(J_h f\big)(x)
 &= \sup_{\la\in \La}\int_{\R^d}f(y)e^{-\la \cdot (x-y)-h|\la|^2/2}\ph_0(h,x-y)\, {\rm d}y\\
 &= \sup_{\la\in \La}\E\Big[f(x+W_h)e^{-\la \cdot W_h-h|\la|^2/2}\Big]\\
 &\leq \Big(\E\big[|f(x+W_h)|^p\big]\Big)^{1/p}\sup_{\la\in \La}\Big(e^{-qh|\la|^2/2}\E\big[e^{-q\la \cdot W_h}\big]\Big)^{1/q}\\
 & =\Big(\E\big[|f(x+W_h)|^p\big]\Big)^{1/p}\sup_{\la\in \La} e^{(q-1)h|\la|^2/2}\\
 &= \Big(\E\big[|f(x+W_h)|^p\big]\Big)^{1/p} e^{(q-1)h\overline\la^2/2}=:\big(C(h)f\big)(x),
\end{align*}
where $\overline \la:=\sup_{\la\in \La}|\la|$ and $\frac 1p + \frac 1q =1$. As
\[ \big[(C(h)f)(x)\big]^p = e^{qh\overline{\lambda}^2/2} \big[ |f|^p\ast \ph_0(h,\,\cdot\, )\big](x),\]
we obtain that $C(h_1)C(h_2) = C(h_1+h_2)$ for $h_1,h_2>0$. Therefore,
\[
 J_\pi f\leq C(t_1-t_0)\cdots C(t_m-t_{m-1})f=C(t_m)f
\]
for any partition $\pi=\{t_0,t_1,\dots,t_m\}\in P$ with $0=t_0< t_1< \ldots < t_m$. By Fubini's theorem,
\[
\|C(h)f\|_p^p=e^{qh\overline{\lambda}^2/2}\int_{\R^d}\int_{\R^d} |f(x-y)|^p \ph_0(h,y)\, {\rm d}y\, {\rm d}x=e^{qh\overline{\lambda}^2/2}\|f\|_p^p
\]
for all $h>0$ and $f\in L^p$, showing that $C(h)\colon L^p\to L^p$ is bounded.

Now, let $f\in C_c^\infty$. We consider
\begin{equation}
  \label{rd02}
  (Bf)(x) := \sup_{\lambda\in\Lambda} (A_\lambda f)(x) = \tfrac12 \Delta f(x) + \sup_{\lambda\in \Lambda} \lambda\cdot\nabla f(x)
\end{equation}
for $x\in \R^d$. As, for every $\lambda\in\Lambda$ and $x\in\R^d$,
\[ |\lambda\cdot\nabla f(x)|\le \sum_{j=1}^d |\lambda_j|\,|\partial_j f(x)| \le \overline\lambda \sum_{j=1}^d |\partial_j f(x)| ,\]
we obtain
\begin{equation}\label{rd03}
 \|Bf\|_{L^p} \le C \big( \|\Delta f\|_{L^p} + \overline\lambda\|\nabla f\|_{L^p(\R^d;\R^d)}\big) \le C \max\{1,\overline\lambda\} \|f\|_{W^{2,p}},
\end{equation}
with a constant $C$ independent of $f$ and $\Lambda$, which shows, in particular, that $Bf\in L^p$ for all $f\in C_c^\infty$.

It remains to verify \eqref{eq:levcon4}. Let $f\in C_c^\infty$, and choose a compact set $K\subset\R^d$  with $\{ x+y: x\in \supp f,\, |y|\le 1\}\subset K$. For $x\in \R^d\setminus K$, we obtain $f(x+W_h)=0$ if $|W_h|\le1$, and therefore,
\[ \big( |f|^p\ast \ph_0(h,\,\cdot\,)\big)(x) = \E\big( |f(x+W_h)|^p\big) = \E \big( \mathbf{1}_{\{|W_h|>1\}}|f(x+W_h)|^p  \big).\]
By Fubini's theorem and Markov's inequality, for any $s>2$,
\begin{align*}
 \frac{1}{h}&\int_{\R^d\setminus K}\E\big( \mathbf{1}_{\{|W_h|>1\}}|f(x+W_h)|^p\big)\, {\rm d}x  = \frac 1h \E \Big[ \mathbf{1}_{\{|W_h|>1\}}\int_{\R^d\setminus K} |f(x+W_h)|^p\, {\rm d}x\Big]\\
 & \leq \frac{1}{h}\|f\|_p^p \;\P(|W_h|>1) = \frac{1}{h}\|f\|_p^p \;\P\big(|W_1|>h^{-1/2}\big)\leq h^{s/2-1}\E\big[|W_1|^s\big]\to 0
\end{align*}
as $h\downarrow 0$.
By definition of $C(h)$, it follows that $\frac{1}{h}\int_{\R^d\setminus K}\big|\big(C(h)f\big)(x)\big|^p\, {\rm d}x\to 0$ as $h\downarrow 0$.
We have seen that all conditions of Theorem~\ref{thm:lpenvelope} are satisfied, and therefore
the semigroup envelope $S=(S(t))_{t\in [0,\infty)}$ of $(S_\la)_{\la\in \La}$ exists, and is a sublinear monotone $C_0$-semigroup. In particular, we obtain a unique classical solution to the Cauchy problem
\begin{equation}\label{eq.cauchygexp}
 u'(t)=Au(t)\quad \text{for all }t\geq 0\quad u(0)=f
\end{equation}
in the sense of Corollary~\ref{cor:wellposed} for all initial values $f\in D(A)$, where $A$ is the generator of $S$.\\

As the map $\R^d\to \R,\, x\mapsto \sup_{\lambda\in\Lambda} \lambda\cdot x$ is Lipschitz (which follows, e.g., by Lemma~\ref{A.6}), the same holds for the nonlinearity
\[ F\colon W^{1,p} \to L^p,\, f\mapsto \sup_{\lambda\in\Lambda} \lambda\cdot\nabla f,\]
where $W^{1,p}=W^{1,p}(\R^d)$ denotes the $L^p$-Sobolev space of order $1$. In particular, the operator $B\colon W^{2,p}\to L^p,\, f\mapsto \sup_{\lambda\in\Lambda} A_\lambda f$, is well-defined and Lipschitz. Now let $f\in W^{2,p}$, and let $(f_n)_{n\in\mathbb{N}}$ be a sequence in $C_c^\infty$ with $\|f-f_n\|_{W^{2,p}} \to 0$. By the Lipschitz continuity of $B$, it follows that $(Bf_n)_{n\in\mathbb{N}}$ is a Cauchy sequence in $L^p$ and therefore convergent. By Theorem~\ref{thm:lpenvelope}, we have $Af_n = Bf_n$ for all $n\in \N$, and as the generator $A$ of $S$ is closed due to Proposition \ref{prop:genclosed}, we obtain $f\in D(A)$ with $Af=Bf$. Therefore, we see that $B\subset A$ (see Theorem \ref{thm:uniqueness1}). As the nonlinearity $F$ is Lipschitz continuous as a map from $W^{1,p}$ to $L^p$, it can be shown that all assumptions of \cite[Prop. 7.1.10 (iii)]{Lunardi95} are satisfied. Therefore, for every $f\in W^{2,p}$ there exists a solution $u\in C^1([0,\infty); L^p)$ with $u(t)\in W^{2,p}$ for all $t\geq 0$ that solves the Cauchy problem
\[ u'(t) = B u(t) \quad\text{for all }t>0,\quad u(0)=f.\]
By Theorem \ref{thm:uniqueness1}, it follows that $u(t)=S(t)f$ for all $t\ge 0$ and $f\in W^{2,p}$. In particular, $W^{2,p}$ is $S(t)$-invariant for all $t\geq 0$. Therefore, $S$ is the unique continuous extension of the solution operator $f \mapsto u(\cdot,f)$, which is defined on $W^{2,p}$. Moreover, we obtain the existence of a classical solution to $u'=Au$ for initial data in $D(A)$, which is a superset of $W^{2,p}$. Notice that we did not use results from PDE theory in order to obtain the well-posedness (in particular the existence and uniqueness of a solution) of the above Cauchy problem \eqref{eq.cauchygexp}.
\end{example}


\begin{example}[Compound Poisson processes]\label{ex.compoundpois}
Let $\mu\colon \BB(\R^d)\to [0,1]$ be a fixed probability measure. For $\la\geq 0$, $t\geq 0$, $f\in L^p$ and $x\in \R^d$, let
\[
 \big(S_\la(t)f\big)(x):=e^{-\la t}\sum_{n=0}^\infty\frac{(\la t)^n}{n!} \int_{\R^d}\cdots \int_{\R^d} f(x+y_1+\ldots +y_n)\, {\rm d}\mu(y_1)\cdots {\rm d}\mu(y_n).
\]
Then, $S_\la$ is the semigroup corresponding to a compound Poisson process with intensity $\la\geq 0$ and jump distribution $\mu$. Now, let $\La\subset [0,\infty)$ be bounded, $\underline\la:=\inf\La$ and $\overline\la:=\sup\La$. Let
\[
 J_h f:=\sup_{\la\in \La}S_\la(h)f\quad \text{for }h\geq 0\text{ and }f\in L^p.
\]
Then, by Jensen's inequality,
\begin{align*}
\big(J_h f\big)(x)&\leq \bigg(\sup_{\la\in \La}e^{-\la h}\sum_{n=0}^\infty\frac{(\la h)^n}{n!} \int_{\R^d}\cdots \int_{\R^d} |f(x+y_1+\ldots +y_n)|^p\, {\rm d}\mu(y_1)\cdots {\rm d}\mu(y_n)\bigg)^{1/p}\\
&\leq e^{\big(\overline\la -\underline \la\big)h}\big(\big(S_{\overline\la}(h)|f|^p\big)(x)\big)^{1/p}=:\big(C(h)f\big)(x)
\end{align*}
for all $h\geq 0$, $f\in L^p$ and $x\in \R^d$. As before, we see that $C(h_1)C(h_2)=C(h_1+h_2)$ for all $h_1,h_2>0$ and
\[
 J_\pi f\leq C(t_1-t_0)\cdots C(t_m-t_{m-1})f=C(t_m)f
\]
for any partition $\pi=\{t_0,t_1,\dots,t_m\}\in P$ with $0=t_0< t_1< \ldots < t_m$.
Again, by Fubini's theorem,
\[
\|C(h)f\|_p=e^{\big(\overline\la -\underline \la\big) h}\|f\|_p
\]
for all $h\geq 0$ and $f\in L^p$, showing that $C(h)\colon L^p\to L^p$ is bounded. Let $f\in C_c^\infty$. It remains to show that $\frac{1}{h}\int_{\R^d\setminus K}\big|\big(C(h)f\big)(x)\big|^p\, {\rm d}x<\ep$ for $h>0$ sufficiently small. However, this follows from the fact that
\[
 \int_{\R^d}\bigg|\frac{\big(S_{\overline\la}(h)|f|^p\big)(x)-|f(x)|^p}{h}-\overline\la\int_{\R^d}|f(x+y)|^p-|f(x)|^p\, {\rm d}\mu (y)\bigg|\, {\rm d}x\to 0\quad \text{as }h\downarrow 0.
\]
By Theorem \ref{thm:lpenvelope}, the semigroup envelope $S=(S(t))_{t\in [0,\infty)}$ of $(S_\la)_{\la\in \La}$ exists, and is a monotone, bounded and sublinear $C_0$-semigroup. Let $B\colon L^p\to L^p$ be given by
\[
 (Bf)(x):=\sup_{\la\in \La}\la\int_{\R^d}\big(f(x+y)-f(y)\big)\,{\rm d}\mu(y)\quad \text{for }f\in L^p\text{ and }x\in \R^d.
\]
Then, we have $A=B$ on $C_c^\infty$ by Theorem~\ref{thm:lpenvelope}. Since $B$ is bounded and sublinear, and thus globally Lipschitz (see Lemma~\ref{A.6}), $A$ is closed by Proposition \ref{prop:genclosed} and $C_c^\infty$ is dense in $L^p$, it follows that $D(A)=L^p$ and therefore $A=B$. In particular, we obtain a classical solution in the sense of Corollary~\ref{cor:wellposed} to the initial value problem
\[
 u'(t)=Au(t)=Bu(t)\quad \text{for all }t\geq 0, \quad u(0)=f, \]
for all initial values $f\in L^p$.

Finally, we remark that due to the global Lipschitz continuity of $B$, we can also apply  the theorem of Picard-Lindel\"of to obtain a unique solution $u=u(\cdot,f)$ to the abstract initial value problem
\[
 u'(t)=Bu(t)\quad \text{for all }t\geq0, \quad u(0)=f, \]
 for all $f\in L^p$. By Theorem \ref{thm:uniqueness1}, it follows that $u(t,f) = S(t)f$ for all $t\ge 0$ and $f\in L^p$.
\end{example}

\begin{appendix}
	
	\section{Bounded convex operators}
	Let $X$ and $Y$ be Banach lattices.
	For an operator $S\colon X\to Y$, we define $S_x\colon X\to Y$ by $S_x y:=S(x+y)-Sx$ for all $x,y\in X$. Recall that $S\colon X\to Y$ is bounded, if $\|S\|_r<\infty$ for all $r>0$, where
	\[
	\|S\|_r:=\sup_{x\in B(0,r)}\|S x\|.
	\]
	Here, $B(x_0,r):=\{x\in X\colon \|x-x_0\|\leq r\}$ for $x_0\in X$ and  $r>0$.
	
	\begin{lemma}\label{lem:lip}
		Let $S\colon X\to Y$ be convex with $S 0=0$ and $r>0$ with $b:=\|S\|_r<\infty$. Then,
		\[
		\|S x\|\leq \tfrac{2b}{r}\|x\|
		\]
		for all $x\in B(0,r)$.
	\end{lemma}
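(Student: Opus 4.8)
The plan is to deduce the estimate from two uses of the defining convexity inequality, combined with the lattice structure of $Y$. The point $x=0$ is settled by $S0=0$, so one fixes $x\in B(0,r)$ with $x\neq 0$, sets $t:=\|x\|/r\in(0,1]$ and $u:=\tfrac{r}{\|x\|}x$. Then $\|u\|=r$, so $u,-u\in B(0,r)$ and hence $\|Su\|\le b$ and $\|S(-u)\|\le b$. All the work is to sandwich $Sx$ between controlled multiples of $Su$ and $S(-u)$ in the order of $Y$.

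For the upper bound I would write $x=t\,u+(1-t)\cdot 0$ and invoke convexity together with $S0=0$ to get $Sx\le t\,Su+(1-t)S0=t\,Su$. For the lower bound I would use a reflection trick: with $\la:=\tfrac{r}{r+\|x\|}\in(0,1)$ one checks the identity $0=\la x+(1-\la)(-u)$, and since $1-\la=\la t$, convexity of $S$ and $S0=0$ give $0=S0\le\la Sx+(1-\la)S(-u)$, that is $Sx\ge -t\,S(-u)$. Combining the two inequalities yields $-t|S(-u)|\le Sx\le t|Su|$ in $Y$, using $Su\le|Su|$ and $-S(-u)\le|S(-u)|$.

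The final step is to pass to norms: from $-t|S(-u)|\le Sx\le t|Su|$ one obtains $|Sx|\le t|Su|\vee t|S(-u)|\le t\big(|Su|+|S(-u)|\big)$, and since the norm on a Banach lattice is monotone ($|z|\le w$ implies $\|z\|\le\|w\|$), it follows that $\|Sx\|\le t\big(\|Su\|+\|S(-u)\|\big)\le 2bt=\tfrac{2b}{r}\|x\|$, which is the claim. I do not expect a genuine obstacle here; the only places needing a little care are the choice of the convex combinations—in particular verifying the reflection identity $0=\la x+(1-\la)(-u)$ with $\la=\tfrac{r}{r+\|x\|}$ and the relation $1-\la=\la t$—and the routine but essential appeal to monotonicity of the lattice norm when estimating $\|Sx\|$.
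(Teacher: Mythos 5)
Your proof is correct and follows essentially the same route as the paper: sandwich $Sx$ between $t\,Su$ and $-t\,S(-u)$ with $u=\tfrac{r}{\|x\|}x$, then pass to norms using the lattice structure. The only cosmetic difference is that you obtain the lower bound in one step from the convex combination $0=\la x+(1-\la)(-u)$, whereas the paper first uses $Sx\ge -S(-x)$ and then scales $-x$ out to $-u$; both yield the same inequality.
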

	
	\begin{proof}
		Let  $x\in B(0,r)$. For $x=0$, the statement holds by assumption. For $x\neq 0$, the convexity of $S$ implies that
		\[
		S x\leq \tfrac{\|x\|}{r} S\Big( \tfrac{r}{\|x\|}x\Big)
		\quad\mbox{and}\quad S x\geq -S(-x) \geq -\tfrac{\|x\|}{r} S\Big( -\tfrac{r}{\|x\|}x\Big),
		\]
		so that,
		\[
		\|S x\|\leq \tfrac{\|x\|}{r} \Big(\big\|S\big( \tfrac{r}{\|x\|}x\big)\big\|+\big\|S\big(- \tfrac{r}{\|x\|}x\big)\big\| \Big)\le \tfrac{2b}{r}\|x\|.
		\]
	\end{proof}
	
	The following two lemmas aim to clarify the difference between convex continuous and convex bounded operators.
	
	\begin{lemma}\label{lemma1}
		Let $S\colon X\to Y$ be convex. Then, the following statements are equivalent:
		\begin{itemize}
			\item[(i)] $S$ is continuous.
			\item[(ii)] For all $x\in X$, there exists some $r>0$ such that $\|S_x\|_r<\infty$.
		\end{itemize}
	\end{lemma}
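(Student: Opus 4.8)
The plan is to establish the two implications separately; the substantive content of the equivalence lies in (ii)~$\Rightarrow$~(i), and it will follow almost immediately from Lemma~\ref{lem:lip} once that lemma is applied to the recentered operators $S_x$ rather than to $S$ itself.

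For (i)~$\Rightarrow$~(ii) I would simply unwind the definition of continuity, which does not even require convexity: fixing $x\in X$ and taking $\ep=1$ in the definition of continuity of $S$ at $x$, there is some $\de>0$ with $\|S(x+z)-S(x)\|\le 1$ whenever $\|z\|\le\de$; this is precisely the statement $\|S_x\|_\de\le 1<\infty$, so (ii) holds with $r=\de$.

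For (ii)~$\Rightarrow$~(i), fix $x\in X$; the goal is to show that $S$ is continuous at $x$. First note that $S_x$ is convex — the map $y\mapsto x+y$ is affine and subtracting the constant $S(x)$ preserves convexity — and that $S_x 0=0$. By hypothesis there is $r>0$ with $b:=\|S_x\|_r<\infty$, so Lemma~\ref{lem:lip}, applied to $S_x$ in place of $S$, gives
\[
\|S(x+y)-S(x)\|=\|S_x y\|\le \tfrac{2b}{r}\,\|y\|\qquad\text{for all }y\in B(0,r).
\]
Letting $y\to 0$ shows that $S$ is continuous at $x$, and since $x\in X$ was arbitrary, $S$ is continuous on $X$.

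The argument contains no real obstacle; the one point to get right is that Lemma~\ref{lem:lip} must be invoked for $S_x$, whose boundedness on a small ball around $0$ is exactly what (ii) provides, and not for $S$, which in general neither vanishes at the origin nor is bounded on balls of $X$. In passing, this reasoning also yields the slightly stronger fact that a convex operator between Banach lattices is continuous if and only if it is locally Lipschitz in a neighbourhood of every point.
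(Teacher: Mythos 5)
Your proposal is correct and follows essentially the same route as the paper: the substantive direction (ii)~$\Rightarrow$~(i) is handled exactly as in the paper by applying Lemma~\ref{lem:lip} to the recentered convex operator $S_x$, which vanishes at $0$. The only (immaterial) difference is that you prove (i)~$\Rightarrow$~(ii) directly from the $\ep$-$\de$ definition, whereas the paper argues by contraposition via an unbounded sequence $S_x y_n$.
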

	
	\begin{proof}
		Let $x\in X$ and $r>0$ with $b:=\|S_x\|_r<\infty$. Then, since $S_x$ is convex with $S_x(0)=0$, we obtain from Lemma \ref{lem:lip} that
		\[
		\|S_x y\|\leq \tfrac{2b}{r} \|y\|\quad\mbox{for all } y\in B(0,r).
		\]
		This shows that $S_x$ is continuous at $0$, i.e., $S$ is continuous at $x$.
		
		Now, assume that there exists some $x\in X$ such that $\|S_x\|_r=\infty$ for all $r>0$. Then, there exists a sequence $(y_n)_n$ in $X$ with $y_n\to 0$ and $\|S_x y_n\|\geq n$. Therefore, the sequence $(S_x y_n)_n$ in $Y$ is unbounded, and thus not convergent. This shows that $S_x$ is not continuous at $0$, i.e., $S$ is not continuous at $x$.
	\end{proof}
	
	\begin{lemma}\label{boundedness}
		Let $S\colon X\to Y$. Then, the following statements are equivalent:
		\begin{itemize}
			\item[(i)] $S$ is bounded.
			\item[(ii)] For all $x\in X$ and all $r>0$, it holds $\|S_x\|_r<\infty$.
		\end{itemize}
	\end{lemma}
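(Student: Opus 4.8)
The plan is to prove both implications directly from the definition of $\|\cdot\|_r$ and the triangle inequality, without invoking any convexity of $S$; this absence of a convexity hypothesis is precisely the feature that distinguishes boundedness from continuity, cf.\ Lemma~\ref{lemma1}.

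For the implication (ii) $\Rightarrow$ (i), I would specialize the hypothesis to $x=0$. Since $S_0 y = S(0+y)-S0 = Sy-S0$ for all $y\in X$, the bound $\|S_0\|_r<\infty$ controls $\|Sy\|$ on $B(0,r)$ through $\|Sy\|\le\|S_0 y\|+\|S0\|$; taking the supremum over $y\in B(0,r)$ gives $\|S\|_r\le\|S_0\|_r+\|S0\|<\infty$, and since $r>0$ is arbitrary, $S$ is bounded.

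For (i) $\Rightarrow$ (ii), I would fix $x\in X$ and $r>0$ and use the elementary inclusion $B(x,r)\subset B(0,\|x\|+r)$: for $y\in B(0,r)$ one has $\|x+y\|\le\|x\|+r$, hence $\|S(x+y)\|\le\|S\|_{\|x\|+r}$, and likewise $\|Sx\|\le\|S\|_{\|x\|+r}$ since $x\in B(0,\|x\|+r)$. Then
\[
 \|S_x y\|=\|S(x+y)-Sx\|\le\|S(x+y)\|+\|Sx\|\le 2\|S\|_{\|x\|+r}
\]
for every $y\in B(0,r)$, so $\|S_x\|_r\le 2\|S\|_{\|x\|+r}<\infty$; since $x$ and $r$ are arbitrary, (ii) holds.

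The main obstacle here is essentially nonexistent: each direction is a single application of the triangle inequality combined with the inclusion $B(x,r)\subset B(0,\|x\|+r)$. The only point worth emphasizing is that, in contrast to Lemma~\ref{lemma1}, no convexity of $S$ is used — a bounded operator automatically has all of its increment operators $S_x$ bounded on every ball, and conversely boundedness of the single increment operator $S_0$ already forces boundedness of $S$.
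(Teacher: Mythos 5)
Your proof is correct and follows exactly the same route as the paper: specializing to $x=0$ for (ii)~$\Rightarrow$~(i), and the estimate $\|S_x\|_r\le 2\|S\|_{\|x\|+r}$ via the inclusion $B(x,r)\subset B(0,\|x\|+r)$ for the converse. You have merely written out the triangle-inequality steps that the paper leaves implicit.
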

	
	\begin{proof}
		Clearly, (ii) implies (i) by considering $x=0$ in (ii). Therefore, assume that $S$ is bounded. Then, for every $x\in X$ and $r>0$, one has $\|S_x\|_r\leq 2\|S\|_{\|x\|+r}<\infty$.
	\end{proof}
\begin{corollary}\label{cor:Lip}
Let $S\colon X\to Y$ be bounded and convex. Then, $S$ is Lipschitz on bounded subsets, i.e., for every $r>0$, there exists some $L>0$ such that $\|Sx-Sy\|\leq L \|x-y\|$ for all $x,y\in B(0,r)$.
\end{corollary}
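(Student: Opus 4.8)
The plan is to reduce the Lipschitz estimate on bounded sets to the linear growth bound for bounded convex operators vanishing at the origin that has already been established in Lemma~\ref{lem:lip}, applied to the difference operators $S_y$.

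First I would fix $r>0$ and take arbitrary $x,y\in B(0,r)$. Writing $x=y+(x-y)$ and using the definition $S_y z:=S(y+z)-Sy$, one gets the identity
\[
 Sx-Sy=S_y(x-y),
\]
and notes that $x-y\in B(0,2r)$. Hence it suffices to bound $\|S_y z\|$ by a constant, \emph{independent of} $y\in B(0,r)$, times $\|z\|$, for all $z\in B(0,2r)$.

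Next, since $S$ is bounded, the estimate obtained in the proof of Lemma~\ref{boundedness} gives $\|S_y\|_{2r}\le 2\|S\|_{\|y\|+2r}\le 2\|S\|_{3r}=:b<\infty$, where the second inequality uses $\|y\|\le r$ together with the monotonicity of $\rho\mapsto\|S\|_\rho$; the point is that $b$ does not depend on $y$. The operator $S_y$ is convex and satisfies $S_y 0=0$, so Lemma~\ref{lem:lip} (with the radius $2r$ in place of $r$) yields
\[
 \|S_y z\|\le \tfrac{2b}{2r}\|z\|=\tfrac{b}{r}\|z\|\qquad\text{for all }z\in B(0,2r).
\]
Combining this with the identity above and $z=x-y\in B(0,2r)$ gives $\|Sx-Sy\|\le \tfrac{b}{r}\|x-y\|$, so $L:=2\|S\|_{3r}/r$ works.

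There is essentially no serious obstacle here; the only point requiring (mild) care is to make the bound $b$ \emph{uniform} over all $y\in B(0,r)$ rather than merely finite for each fixed $y$, which is exactly why one passes to the enlarged radius $\|y\|+2r\le 3r$ and invokes monotonicity of $\|S\|_\rho$ in $\rho$ before applying Lemma~\ref{lem:lip}.
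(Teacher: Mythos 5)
Your argument is correct and follows essentially the same route as the paper's proof: the uniform bound $\|S_y\|_{2r}\le 2\|S\|_{3r}$ via the estimate from Lemma~\ref{boundedness}, followed by Lemma~\ref{lem:lip} applied to the convex operator $S_y$ with $S_y0=0$ on the ball of radius $2r$, yielding the same constant $L=2\|S\|_{3r}/r$. No issues.
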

\begin{proof}
Let $x,y\in B(0,r)$, so that $x-y\in B(0,2r)$. As in the proof of Lemma \ref{boundedness}, it follows that
\[
\|S_x\|_{2r}\leq 2\|S\|_{\|x\|+2r}\leq 2\|S\|_{3r}=:b.
\]
Hence, it follows from Lemma \ref{lem:lip} that $\|Sy-Sx\|=\|S_x(y-x)\|\leq \tfrac{b}{r}\|y-x\|$.
\end{proof}

	In the previous two lemmas, we have seen that, for a convex operator $S\colon X\to Y$, boundedness implies continuity. The following example shows that a convex and continuous operator $S\colon X\to Y$ is not necessarily bounded.
	
	\begin{example}
		Let $X=c_0:=\big\{(x_n) \text{ in }\R\colon x_n\to 0 \text{ as }n\to \infty\big\}$ be endowed with the supremum norm $\|\cdot \|_\infty$ and $Y=\R$. Then, $X$ and $Y$ are two Banach lattices. We define $S\colon X\to Y$ by
		\[
		Sx:=\sup_{n\in \N} |x_n|^n.
		\]
		Notice that $S$ is well-defined, since for every $x\in X$, there exists some $n_0\in \N$ such that $|x_n|\leq 1$ for all $n\in \N$ with $n \geq n_0$. We first show that $S\colon X\to Y$ is convex. For $\la\in [0,1]$ and $x,y\in X$, one has
		\[
		\big|\la x_n+(1-\la )y_n\big|^n\leq \la |x_n|^n+(1-\la )|y_n|^n
		\]
		for all $n\in \N$, which implies that
		\[
		S\big(\la x+(1-\la)y\big)=\sup_{n\in \N}\big|\la x_n+(1-\la )y_n\big|^n\leq \la S x +(1-\la)Sy.
		\]
		Next, we show that $S$ is continuous. Let $x\in X$ and $\ep\in (0,1]$. Then, there exists $n_0\in \N$ such that $|x_n|\leq \tfrac{\ep}{3}$ for all $n\in \N$ with $n\geq n_0$. Now, let $y\in X$ with $\|x-y\|_\infty\le\tfrac{\ep}{3}$ and
		$\|x-y\|_\infty$ is sufficiently small such that
		\[
		\big||x_n|^n-|y_n|^n\big|\leq\ep \quad \text{for all }n\in \N \text{ with }n<n_0.
		\]
		For $n\in \N$ with $n\geq n_0$, one has
		\[
		|x_n|+|y_n|\leq 2|x_n|+\|x-y\|_\infty\leq \ep.
		\]
		Hence, for all $n\in \N$ with $n\geq n_0$,
		\[
		\big||x_n|^n-|y_n|^n\big|\leq |x_n|^n+|y_n|^n\leq |x_n|+|y_n|\leq \ep.
		\]
		Altogether,
		\[
		|Sx-Sy|\leq \sup_{n\in \N} \big||x_n|^n-|y_n|^n\big|\leq \ep.
		\]
		So far, we have shown that $S\colon X\to Y$ is convex and continuous. However, $S$ is not bounded. To that end, let $e_k$ denote the $k$-th unit vector. Then, $2e_k\in B(0,2)$ for all $k\in \N$, but $S(2e_k)=2^k\to \infty$.
	\end{example}
	
	
	In the sublinear case, the notions of continuity and boundedness are equivalent.
	\begin{lemma}\label{lem:sub}
		Let $S\colon X \to Y$ be sublinear. Then, $S$ is bounded, if and only if it is continuous, if and only if it is continuous at 0.
	\end{lemma}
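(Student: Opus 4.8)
The plan is to establish the cycle of implications ``bounded $\Rightarrow$ continuous $\Rightarrow$ continuous at $0$ $\Rightarrow$ bounded'', from which all three asserted equivalences follow at once. The implication ``bounded $\Rightarrow$ continuous'' is already available: a bounded convex operator is Lipschitz on bounded sets by Corollary \ref{cor:Lip}, hence continuous (one could also combine Lemma \ref{boundedness} with Lemma \ref{lemma1}). The implication ``continuous $\Rightarrow$ continuous at $0$'' is trivial.

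The only part that genuinely uses sublinearity is ``continuous at $0$ $\Rightarrow$ bounded'', and this is where the argument is concentrated. First I would note that positive homogeneity forces $S0 = 0$, since $S0 = S(2\cdot 0) = 2\,S0$. Continuity at $0$ then yields, for $\ep = 1$, some $\delta > 0$ with $\|Sx\| \le 1$ whenever $\|x\| \le \delta$. Given $r > 0$ and $x \in B(0,r)$ with $x \neq 0$, I would rescale by setting $z := \tfrac{\delta}{\|x\|}\,x$, so that $\|z\| = \delta$ and $x = \tfrac{\|x\|}{\delta}\,z$. Positive homogeneity gives $Sx = \tfrac{\|x\|}{\delta}\,Sz$, hence $\|Sx\| = \tfrac{\|x\|}{\delta}\,\|Sz\| \le \tfrac{\|x\|}{\delta} \le \tfrac{r}{\delta}$. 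Taking the supremum over $x \in B(0,r)$ gives $\|S\|_r \le \tfrac{r}{\delta} < \infty$, and since $r > 0$ is arbitrary, $S$ is bounded.

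I do not expect a real obstacle here; the whole point is the simple observation that, under positive homogeneity, a local bound near the origin propagates to a global linear growth estimate, so that continuity at the single point $0$ already forces boundedness. (In fact only positive homogeneity, not convexity, is used for the last implication.) Assembling the three implications completes the proof.
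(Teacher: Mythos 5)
Your proof is correct and follows essentially the same route as the paper: boundedness implies continuity via the earlier lemmas on convex operators, and continuity at $0$ yields a bound on some ball $B(0,\delta)$ which positive homogeneity then rescales to every $B(0,r)$. Your version merely spells out the scaling computation (and the observation $S0=0$) that the paper leaves implicit.
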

	
	\begin{proof}
		We have already seen that boundedness implies continuity. Therefore, assume that $S$ is continuous at $0$. Then, there exists some $r>0$ such that $\|S\|_r<\infty$. Since $S$ is positive homogeneous, it follows that $\|S\|_r<\infty$ for all $r>0$.
	\end{proof}
	
	\begin{lemma}\label{A.6}
		Let $S\colon X \to Y$ be sublinear and continuous. Then $S$ is Lipschitz, i.e., there exists some $L>0$ such that $\|Sx-Sy\|\leq L \|x-y\|$ for all $x,y\in X$.
	\end{lemma}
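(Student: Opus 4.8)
The plan is to reduce the Lipschitz estimate to the boundedness of $S$ together with subadditivity. First, by Lemma \ref{lem:sub}, a continuous sublinear operator is automatically bounded, so $L:=\|S\|_1<\infty$. Positive homogeneity then upgrades this to a global bound: for $z\neq 0$ one has $\|Sz\|=\|z\|\,\big\|S\big(z/\|z\|\big)\big\|\le L\|z\|$, and this also holds for $z=0$ since $S0=0$. Hence $\|Sz\|\le L\|z\|$ for all $z\in X$.

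Next I would use subadditivity, which follows from convexity and positive homogeneity via $S(u+v)=2S\big(\tfrac12 u+\tfrac12 v\big)\le Su+Sv$. Taking $u=x-y$, $v=y$ yields $Sx-Sy\le S(x-y)$, and symmetrically $Sy-Sx\le S(y-x)$. In the Banach lattice $Y$ this gives
\[
|Sx-Sy|=(Sx-Sy)\vee(Sy-Sx)\le S(x-y)\vee S(y-x)\le |S(x-y)|+|S(y-x)|.
\]
Applying that $\|\cdot\|$ is a lattice norm, the triangle inequality, and the bound from the first step, we conclude $\|Sx-Sy\|\le\|S(x-y)\|+\|S(y-x)\|\le 2L\|x-y\|$, so $2L=2\|S\|_1$ is a Lipschitz constant.

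There is no substantial obstacle; the only point requiring a little care is the passage from the two order inequalities to the scalar norm bound, for which one uses $|w|=w\vee(-w)$, monotonicity of the lattice join, and the lattice-norm property of $\|\cdot\|$. Alternatively, one could invoke Corollary \ref{cor:Lip} after observing that for sublinear $S$ the local bound $\|S_x\|_r$ can be chosen independently of $x$, but the direct computation above is shorter.
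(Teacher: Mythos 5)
Your proposal is correct and follows essentially the same route as the paper's proof: bound $\|Sz\|\le\|S\|_1\|z\|$ via positive homogeneity and Lemma~\ref{lem:sub}, use sublinearity to get $Sx-Sy\le S(x-y)$ and its symmetric counterpart, pass to $|Sx-Sy|\le|S(x-y)|+|S(y-x)|$ in the lattice, and apply the lattice-norm property. The only difference is cosmetic (the paper sets $L:=2\|S\|_1$ at the outset), and your more explicit justification of the lattice step via $|w|=w\vee(-w)$ is a fine elaboration of what the paper leaves to ``a symmetry argument.''
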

	
	\begin{proof}
		Let $L:=2\|S\|_1$ which is finite by Lemma \ref{lem:sub}. Fix $x,y\in X$. By sublinearity, it holds
		\[
		Sx-S y\leq S(x-y)\leq |S(x-y)|+|S(y-x)|.
		\]
		By a symmetry argument, it follows that
		\[
		|Sx-Sy|\leq |S(x-y)|+|S(y-x)|.
		\]
		Hence,
		\[
		\|Sx-Sy\|\leq \|S(x-y)\|+\|S(y-x)\|\leq L\|x-y\|.
		\]
	\end{proof}

	Recall that convex monotone operators are continuous. For the sake of a self-contained exposition, we provide a short proof. We refer, e.g., to B\'atkai et al.~\cite{MR3616245} for a similar proof in the linear case.
	\begin{lemma}\label{continuity}
		Let $S\colon X\to Y$. Then, the following properties hold:
		\begin{itemize}
			\item[a)] If $S$ is convex and monotone, then it is continuous.
			\item[b)] If $S$ is positive homogeneous and monotone, then it is bounded.
		\end{itemize}
	\end{lemma}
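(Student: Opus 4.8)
The plan is to handle b) first and then a), and in both cases to reduce everything to an estimate on $\|S\|_r$ (respectively on $\|S_x\|_r$). The recurring difficulty is that in an infinite-dimensional Banach lattice the closed ball $B(0,r)$ is never order bounded, so one cannot simply sandwich $Sx$ for all $x\in B(0,r)$ between two fixed elements. The device that circumvents this is to pass to a suitably chosen \emph{sequence} inside $B(0,r)$ and absorb it into a single dominating element of $X$ via an absolutely convergent series; monotonicity then controls $\|Sx_n\|$ along that sequence.

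For b), note that positive homogeneity forces $S0=0$ (apply $S(\lambda 0)=\lambda S0$) and gives $\|S\|_r\le r\|S\|_1$ for every $r>0$, so it suffices to show $\|S\|_1<\infty$. Assuming the contrary, I would pick $x_n\in B(0,1)$ with $\|Sx_n\|\ge 4^n$ and set $x:=\sum_{n\ge1}2^{-n}|x_n|$, which converges in $X$ since $\sum_n 2^{-n}\|x_n\|<\infty$ and $X$ is complete; because the positive cone is closed, $2^{-k}|x_k|\le x$, hence $-2^kx\le x_k\le 2^kx$. Monotonicity together with positive homogeneity then yields $2^kS(-x)\le Sx_k\le 2^kSx$, and from $a\le b\le c$ one gets $\|b\|\le 2\|a\|+\|c\|$, so $\|Sx_k\|\le 2^k\big(\|Sx\|+2\|S(-x)\|\big)$, contradicting $\|Sx_k\|\ge 4^k$ for $k$ large.

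For a), by Lemma \ref{lemma1} it suffices to show that for each $x\in X$ there is some $r>0$ with $\|S_x\|_r<\infty$; here $T:=S_x$ is convex and monotone with $T0=0$. First I would show $\|Ty\|\le\|T|y|\,\|$ for all $y$: monotonicity gives $Ty\le T|y|$ since $y\le|y|$, while convexity with $T0=0$ gives $0=T\big(\tfrac12 y+\tfrac12(-y)\big)\le\tfrac12 Ty+\tfrac12 T(-y)$, hence $Ty\ge -T(-y)\ge -T|y|$, and since $T|y|\ge T0=0$ this yields $\|Ty\|\le\|T|y|\,\|$. Thus $\|T\|_r\le\sup\{\|Tz\|:z\ge0,\ \|z\|\le r\}$, and it remains to bound the latter for some $r$. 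If it were infinite for every $r$, I would choose $z_n\ge0$ with $\|z_n\|\le 2^{-n}$ and $\|Tz_n\|\ge n$, set $z:=\sum_{n\ge1}z_n\in X$, observe $z\ge z_n$ for each $n$, and conclude $0=T0\le Tz_n\le Tz$, so $\|Tz_n\|\le\|Tz\|$ — a contradiction. Hence $\|T\|_r<\infty$ for some $r>0$, and Lemma \ref{lemma1} gives continuity of $S$ at $x$.

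The only genuine obstacle is the one flagged above: compensating for the fact that balls are not order bounded, which is resolved by the summation trick $\sum 2^{-n}|x_n|$ using completeness of $X$ and closedness of its positive cone. Everything else is routine: the elementary estimate $\|b\|\le 2\|a\|+\|c\|$ for $a\le b\le c$ in a Banach lattice, the reduction $\|S\|_r\le r\|S\|_1$ under positive homogeneity, and, in a), the passage from $\|S_x\|_r<\infty$ to continuity via the already established Lemma \ref{lemma1} (equivalently Lemma \ref{lem:lip}).
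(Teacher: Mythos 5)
Your proof is correct, and it rests on the same core device as the paper's: closed balls are not order bounded, so one passes to a sequence and absorbs it into a single dominating element via an absolutely convergent series of positive terms, after which monotonicity of $S$ and the lattice property of the norm control the terms. The differences are in how the contradiction is extracted. For part a), the paper argues directly with continuity: assuming $S_x(x_n)\not\to 0$ along $x_n\to 0$, it reduces to $x_n\ge 0$, forms $y=\sum_n n x_n$, and then invokes the fact that the real-valued function $\lambda\mapsto\|S_x(\lambda y)\|$ is convex and monotone on $[0,1]$, hence continuous at $0$ --- contradicting the lower bound $\delta$. You instead prove \emph{local boundedness} of $S_x$ on the positive part of a small ball (via the estimate $\|S_x y\|\le\|S_x|y|\,\|$ and the contradiction $n\le\|Tz_n\|\le\|Tz\|$) and then invoke Lemma~\ref{lemma1} to convert boundedness into continuity; this avoids the one-variable convexity argument entirely and makes part a) structurally identical to part b). For part b), the paper reduces to nonnegative $x_n$ and uses the one-sided bound $0\le S(2^{-n}x_n)\le S(x)$, whereas you keep general $x_n$ and sandwich $S(x_k)$ between $2^kS(-x)$ and $2^kS(x)$ using $-2^kx\le x_k\le 2^kx$; your version sidesteps the (slightly glossed-over in the paper) justification of the reduction to positive elements in the absence of convexity, at the price of needing the elementary estimate $\|b\|\le 2\|a\|+\|c\|$ for $a\le b\le c$. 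Both routes are sound; yours is marginally more uniform across the two parts, the paper's part a) is more self-contained in that it does not route through Lemma~\ref{lemma1}.
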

	
	\begin{proof}
		a) Let $X_+:=\{x\in X\colon x\geq 0\}$. Suppose by way of contradiction that $S$ is not continuous at $x\in X$. Then there exists a sequence $(x_n)$ in $X$ with $x_n\to 0$ such that
		\[
		\|S_x(x_n)\|=\|S(x_n+x)-S(x)\|\ge \delta
		\]
		for all $n$ and some $\delta>0$.
		Since $S_x$ is monotone it holds $\|S_x x_n\|\le \|S_x|x_n|\|$. Hence, we may assume that $x_n\in X_+$ and $\|x_n\|\le\tfrac{1}{n 2^n}$ for all $n\in \N$ by possibly passing to a subsequence. Define $y:=\sum_{n\in \N} n x_n\in X_+$. Then, for every $\lambda\in (0,1]$ one has
		\[
		\lambda y=\sum_n \lambda n x_n\ge x_n\ge 0
		\]
		for all $n\in \N$ with $\lambda n\ge 1$. By monotonicity of $S$, it follows that $\|S_x(\lambda y)\|\ge \|S_x(x_n)\|\ge\delta$ for all $n\in \N$ large enough. Since $y\in X_+$, the function $[0,1]\to\mathbb{R}$, $\lambda\mapsto \|S_x(\lambda y)\|$ is convex and monotone and therefore continuous at 0. This shows that
		\[
		0=\|S_x(0)\|=\lim_{\lambda\downarrow 0} \|S_x(\lambda y)\|\ge\delta>0,
		\]
		which is a contradiction.
		
		b) Assume that $\|S\|_r=\infty$ for some $r>0$. Then, there exists a sequence $(x_n)$ in $B(0,r)$ with $\|Sx_n\|\geq n 2^n$. As in part a), due to the monotonicity of $S$, we may assume that $x^n\geq 0$. Define $x:=\sum_{n\in \N} 2^{-n} x_n\in B(0,r)$. By monotonicity, we obtain that $0\le S(2^{-n} x_n)\le S(x)$ for all $n\in \N$, so that
		\[\|S x\| \geq \|S(2^{-n} x_n)\|=2^{-n}\|S( x_n)\|\ge n\] for all $n\in \N$. Letting $n\to\infty$, this leads to a contradiction.
	%
	\end{proof}
	
	The results in Section \ref{sec:convexsemigroup} strongly rely on the following uniform boundedness principle for convex continuous operators.
	\begin{theorem}\label{lem:unibound}
		Let $\SS$ be a family of convex continuous operators $X\to Y$. Assume that $\sup_{S\in \SS}\|Sx\|<\infty$ for all $x\in X$.
		\begin{itemize}
			\item[(i)] There exists some $r>0$ such that
			\[
			\sup_{S\in \SS} \|S\|_r<\infty.
			\]
			\item[(ii)] For every $x_0\in X$, there exists some $r>0$ such that
			\[
			\sup_{x\in B(x_0,r)}\sup_{S\in \SS}\|S_x\|_r<\infty.
			\]
		\end{itemize}
	\end{theorem}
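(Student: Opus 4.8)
The plan is to run the classical Banach--Steinhaus argument, replacing the linearity of the operators at the crucial step by convexity together with the lattice order of $Y$. For part~(i), set $F_n:=\{x\in X:\sup_{S\in\SS}\|Sx\|\le n\}$ for $n\in\N$. Since each $S$ is continuous, $x\mapsto\|Sx\|$ is continuous, so each $F_n$ is closed as an intersection of closed sets; the pointwise boundedness hypothesis gives $X=\bigcup_{n\in\N}F_n$, and completeness of $X$ together with Baire's theorem yields some $N\in\N$, $\bar x\in X$ and $\varepsilon>0$ with $B(\bar x,\varepsilon)\subseteq F_N$, i.e.\ $\|S\bar y\|\le N$ for all $S\in\SS$ and $\bar y\in B(\bar x,\varepsilon)$.

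The key step is to transfer this bound to a ball around $0$. Put $r:=\varepsilon/2$ and let $x\in B(0,r)$, so that $\bar x\pm 2x\in B(\bar x,\varepsilon)$. Using the convex combinations $x=\tfrac12(\bar x+2x)+\tfrac12(-\bar x)$ and $0=\tfrac12 x+\tfrac12(-x)$ together with convexity of $S$, we obtain
\[
 Sx\le \tfrac12 S(\bar x+2x)+\tfrac12 S(-\bar x)=:q,\qquad Sx\ge 2S0-S(-x),
\]
and, applying the first estimate with $-x$ in place of $x$, also $S(-x)\le\tfrac12 S(\bar x-2x)+\tfrac12 S(-\bar x)$, hence $Sx\ge 2S0-\tfrac12 S(\bar x-2x)-\tfrac12 S(-\bar x)=:p$. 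Since $\|S(-\bar x)\|$ and $\|S0\|$ are bounded over $S\in\SS$ by the pointwise hypothesis (say by $M$ and $M_0$), we get $\|q\|\le\tfrac12 N+\tfrac12 M$ and $\|p\|\le 2M_0+\tfrac12 N+\tfrac12 M$. Now $p\le Sx\le q$ in the vector lattice $Y$ forces $|Sx|\le|p|\vee|q|\le|p|+|q|$, so by monotonicity of the norm $\|Sx\|\le\|p\|+\|q\|\le N+M+2M_0$ for all $S\in\SS$ and $x\in B(0,r)$, which is~(i).

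Part~(ii) follows from~(i) by translation. Fixing $x_0\in X$, the family $\TT:=\{z\mapsto S(x_0+z):S\in\SS\}$ consists of convex continuous operators and is pointwise bounded (because $\SS$ is pointwise bounded at every point $x_0+z$); applying~(i) to $\TT$ gives $\rho>0$ and $c\ge 0$ with $\|S(x_0+z)\|\le c$ for all $S\in\SS$ and $z\in B(0,\rho)$. With $r:=\rho/2$, for every $x\in B(x_0,r)$ and $y\in B(0,r)$ both $x$ and $x+y$ lie in $x_0+B(0,\rho)$, so $\|S_xy\|=\|S(x+y)-Sx\|\le 2c$, which gives~(ii).

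The main obstacle is the transfer step in part~(i): in contrast to the linear case, where $\|Sx\|=\|S(\bar x+x)-S\bar x\|\le 2N$ is immediate, here one has to extract \emph{both} an upper and a lower bound for $Sx$ by writing $x$ and $0$ as convex combinations of points of $B(\bar x,\varepsilon)$ and of the distinguished fixed points $\pm\bar x$ and $0$, and then turn the resulting order sandwich into a norm estimate using that the lattice norm is monotone. The Baire argument and the reduction of~(ii) to~(i) are routine.
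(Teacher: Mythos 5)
Your proof is correct and follows essentially the same route as the paper's: Baire category to locate a ball on which the family is uniformly bounded, convex combinations through the Baire center and a fixed auxiliary point to sandwich $Sx$ between two uniformly norm-bounded elements, the lattice structure of $Y$ to turn the order sandwich into a norm bound, and a translation argument for part (ii). The only differences are cosmetic: the paper recenters at $x_0=(1-\tfrac{2r}{\|x_1\|})x_1$ and uses $\tfrac{x_0}{2}=\tfrac12 x+\tfrac12(x_0-x)$ for the lower bound, whereas you use $0=\tfrac12 x+\tfrac12(-x)$ and bound $S(-x)$ by reapplying the upper estimate.
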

	
	\begin{proof}
		(i) By Baire's category theorem, there exist $c>0$, $x_1\in X$ and $r>0$ such that
		\[
		\|S x\|\leq \tfrac{2c}{3}
		\]
		for all $S\in \SS$ and $x\in B(x_1,4r)$. If $x_1=0$, the proof is finished. Hence, assume that $x_1\neq 0$ and define
		\[
		x_0:=\Big(1-\tfrac{2r}{\|x_1\|}\Big)x_1.
		\]
		Since $\|x_0-x_1\|\leq 2r$, it follows that $B(x_0,2r)\subset B(x_1,4r)$. By assumption,
		\[
		d:=\sup_{S\in \SS}\tfrac{1}{2}\|S(-x_0)\|+2\big\|S\big(\tfrac{x_0}{2}\big)\big\|<\infty.
		\]
		Now, let $x\in B(0,r)$ and $S\in \SS$. Then,
		\[
		Sx= S\big(\tfrac{x_0+2x}{2}-\tfrac{x_0}{2}\big) \leq \tfrac{1}{2}\big(S(x_0+2x)+S(-x_0)\big)
		\]
		and
		\[
		2S\big(\tfrac{x_0}{2}\big)-S(x_0-x)=2S\big(\tfrac{x+(x_0-x)}{2}\big)-S(x_0-x)\leq Sx.
		\]
		We thus obtain that
		\begin{align*}
		\|Sx\|&\leq \tfrac{1}{2}\big\|S(x_0+2x)+S(-x_0)\big\|+\big\|2S\big(\tfrac{x_0}{2}\big)-S(x_0-x)\big\|\\
		&\leq \tfrac{1}{2}\|S(x_0+2x)\|+\|S(x_0-x)\|+\tfrac{1}{2}\|S(-x_0)\|+2\big\|S\big(\tfrac{x_0}{2}\big)\big\|\\
		&\leq c+d.
		\end{align*}
		
		(ii) Let $x_0\in X$. Then, $\sup_{S\in \SS}\|S_{x_0}x\|<\infty$ for all $x\in X$.
		By part a), there exist $b\geq 0$ and $r>0$ such that
		\[
		\sup_{S\in \SS}\|S_{x_0}\|_{2r}\leq \tfrac{b}{2}.
		\]
		Now, let $S\in \SS$, $x\in B(x_0,r)$ and $y\in B(0,r)$. Then, $x+y\in B(x_0,2r)$ and
		\[
		S_x y=S_{x_0}(x+y-x_0)-S_{x_0}(x-x_0).
		\]
		Therefore, $\|S_xy\|\leq \|S_{x_0}(x+y-x_0)\|+\|S_{x_0}(x-x_0)\|\leq b$.
	\end{proof}

\end{appendix}


\end{document}